\documentclass[a4paper,twoside,11pt]{amsart} %

\usepackage[margin=2.5cm,a4paper]{geometry} 
\usepackage{pgf,tikz}
\usetikzlibrary{arrows}
\usetikzlibrary{calc}
\usepackage{multirow}
\usepackage{framed}

\usepackage{xpatch}
\makeatletter
\xpatchcmd{\sqrt}{\@sqrt}{{\mspace{-2.9274mu}\@sqrt}}{}{}
\makeatother

\usepackage{caption} 
\captionsetup[table]{skip=10pt}

\usepackage{amsmath, amssymb, amsthm, bm, centernot}
\usepackage{color,booktabs}
\usepackage{url}
\usepackage{enumerate}
\usepackage{hyperref}
\usepackage[capitalise]{cleveref}


\newtheorem{theorem}{Theorem}

\newtheorem{lemma}[theorem]{Lemma}

\numberwithin{theorem}{section}

\theoremstyle{definition}

\theoremstyle{remark}

\theoremstyle{definition}



\newcommand\tile{
\foreach \x in {0,1,2,3,4} {
  \node [vertex] (v\x) at (0,\x) {};
  \node [vertex] (w\x) at (2,\x){};
  \node (ll\x) at (-1.5,\x){};
  \node (l\x) at (-1.15,\x){};
  \node (r\x) at (3.15,\x){};
  \node (rr\x) at (3.5,\x) {};
   \draw [thick, dotted] (ll\x)--(v\x)--(w\x)--(rr\x);
  }
  \draw [thick, dotted] (v0)--(w1)--(v1)--(w2)--(v2)--(w3)--(v3)--(w4)--(v4)--(w0);
}

\newcommand\sixtile{
\foreach \x in {0,1,2,3,4,5} {
  \node [vertex] (v\x) at (0,\x) {};
  \node [vertex] (w\x) at (2,\x){};
  \node (ll\x) at (-1.5,\x){};
  \node (l\x) at (-1.15,\x){};
  \node (r\x) at (3.15,\x){};
  \node (rr\x) at (3.5,\x) {};
   \draw [thick, dotted] (ll\x)--(v\x)--(w\x)--(rr\x);
  }
  \draw [thick, dotted] (v0)--(w1)--(v1)--(w2)--(v2)--(w3)--(v3)--(w4)--(v4)--(w5)--(v5)--(w0);
}

\mathchardef\mh="2D

\swapnumbers


\definecolor{unocc}{rgb}{0.85,0.85,0.85}
\definecolor{occ}{rgb}{1,0,0}

\setlength{\parindent}{0pt}
\setlength{\parskip}{6pt}

\title{Highly-connected planar cubic graphs with few or many Hamilton cycles}
\author{Irene Pivotto}
\address{Centre for the Mathematics of Symmetry and Computation, Department of Mathematics and Statistics, University of Western Australia}
\email{piv8irene@gmail.com}

\author{Gordon Royle}
\address{Centre for the Mathematics of Symmetry and Computation, Department of Mathematics and Statistics, University of Western Australia}
\email{gordon.royle@uwa.edu.au}

\subjclass[2010]{}

\keywords{cubic graph, planar graph, Hamilton cycle, cyclic edge-connectivity, fullerene, nanotube}

\begin{document}

\begin{abstract}
In this paper we consider the number of Hamilton cycles in planar cubic graphs of high cyclic edge-connectivity, answering two questions
raised by Chia and Thomassen (``On the number of longest and almost longest cycles in cubic graphs'', Ars Combin., 104, 307--320, 2012) about extremal graphs in these families. In particular, we find families of cyclically $5$-edge connected planar cubic graphs with more Hamilton cycles than the generalized Petersen graphs $P(2n,2)$. The graphs themselves are fullerene graphs that correspond to certain carbon molecules known as nanotubes --- more precisely, the family consists of the zigzag nanotubes of (fixed) width $5$ and increasing length. In order to count the Hamilton cycles in the nanotubes, we develop methods inspired by the transfer matrices of statistical physics. We outline how these methods can be adapted to count the Hamilton cycles in nanotubes of greater (but still fixed) width, with the caveat that the resulting expressions involve matrix powers. We also consider cyclically $4$-edge-connected cubic planar graphs with few Hamilton cycles, and exhibit an infinite family of such graphs each with exactly $4$ Hamilton cycles. Finally we consider the ``other extreme'' for these two classes of graphs, thus investigating cyclically $4$-edge connected cubic planar graphs with \emph{many} Hamilton cycles and the cyclically $5$-edge connected cubic planar graphs with \emph{few} Hamilton cycles. In each of these cases, we present partial results, examples and conjectures regarding the graphs with few or many Hamilton cycles.
\end{abstract}


\maketitle

\section{Introduction}

The study of Hamilton cycles in cubic graphs has an extensive history, initially driven by attempts to prove Tait's conjecture that every $3$-connected planar cubic graph is Hamiltonian. If the conjecture were true, the Four-Colour Theorem would have been an immediate corollary, but this possible avenue to a proof was firmly closed nearly $60$ years later by Tutte's \cite{MR0019300} discovery of a  non-Hamiltonian $3$-connected planar cubic graph. In this same paper, Tutte also presented an ingenious parity argument that a cubic graph cannot have a \emph{unique} Hamilton cycle, crediting the result (but not this particular proof) to his friend and colleague C.A.B Smith. (Unfortunately, Smith's original proof appears to have been lost.) As a consequence, a Hamiltonian cubic graph has at least three Hamilton cycles, and while the \emph{planar} cubic graphs with exactly three Hamilton cycles are known (Fowler \cite{MR2698714}), a full classification is not. 

Many authors have considered the \emph{existence} of Hamilton cycles for restricted classes of cubic graphs defined by imposing additional conditions such as planarity, bipartiteness, high connectivity and/or combinations of such conditions, often leading to difficult still-unsolved problems. The most prominent of these is probably Barnette's conjecture that a three-connected cubic planar \emph{bipartite} graph is Hamiltonian.   There is a smaller literature concerning the \emph{enumeration} of Hamilton cycles, though there are some notable results such as Schwenk's \cite{MR1007713} enumeration of the Hamilton cycles in the generalized Petersen graphs $P(m,2)$ (which are defined below). It is clear that exact enumeration results will only ever be possible for explicitly-defined graphs or families of graphs, and only then if the graphs are highly structured.  Where exact results are impossible, researchers usually try to determine \emph{bounds} on the values under consideration. Along these lines, Chia and Thomassen \cite{MR2951794} considered a number of classes of cubic graphs defined according to various connectivity and planarity constraints and investigated upper- and lower-bounds (as functions of the number of vertices) for the number of Hamilton cycles of a graph in each class.
In fact, they actually considered the slightly broader question of bounding the numbers of \emph{longest cycles}, in order to include possibly-non-Hamiltonian graphs in their results. However in this paper, we restrict our attention to Hamiltonian graphs only giving us the main question---If a graph in a particular class has at least one Hamilton cycle, then how many more \emph{must} it have and how many more \emph{can} it have.

Accompanying their results and examples, Chia and Thomassen posed five open problems regarding the number of Hamilton cycles in different classes of cubic graphs. In this paper, we answer two of their questions (which happen to be their Question 1 and Question 5), each of which relates to a class of planar cubic graphs of a particular minimum cyclic edge-connectivity. 
\begin{itemize}
\item [{\bf Q1}] Does every planar cubic cyclically $4$-edge-connected graph on $n$ vertices contain at least $n/2$ longest cycles?
\item [{\bf Q5}] Does there exist a planar cubic cyclically $5$-edge-connected graph on $4n$ vertices with more longest cycles than $P(2n,2)$? 
\end{itemize}
We recall here that 
a graph is \emph{cyclically $k$-edge-connected} if there is no edge-set with fewer than $k$ edges whose deletion leaves at least two connected components each containing a cycle. The constraint that each connected component contain a cycle is designed to rule out trivial edge cuts, such as the edges incident with a single vertex or the four edges incident with exactly one of the endpoints of a  fixed edge. So while a cubic graph is at most $3$-edge-connected under the normal definition, its cyclic edge-connectivity can be arbitrarily large. One family of graphs that crops up frequently in any discussion of Hamilton cycles in cubic graphs is the family of generalized Petersen graphs, defined as follows: $P(m,k)$ has $2m$ vertices $\{u_0,u_1,\ldots,u_{m-1}\} \cup \{v_0,v_1,\ldots,v_{m-1}\}$ where for each $i$,  vertex $u_i$ is adjacent to $u_{i+1}$,  vertex $v_i$ is adjacent to $v_{i+k}$, and $u_i$ is adjacent to $v_i$, with all indices being taken modulo $m$.  Generalized Petersen graphs crop up frequently in the study of cubic graphs, for example $P(5,2)$ is the Petersen graph and $P(10,2)$ is the dodecahedron.


The main results of this paper are that the answers to these two questions are ``No" and ``Yes'' respectively. In \cref{cub4confew} we answer Problem 1 by showing that there is an infinite family of planar cubic cyclically $4$-edge-connected graphs with a \emph{constant} number of Hamilton cycles, independent of the number of vertices in the graph.  We note that this result was recently independently obtained by Goedgebeur, Meersman and Zamfirescu \cite{1812.05650}, along with a number of additional attractive results regarding not-necessarily-cubic Hamiltonian graphs with few Hamilton cycles.

In \cref{cub5conmany} we answer Problem 5 by showing that a certain family 
of \emph{fullerene graphs} (planar cubic graphs with faces of size $5$ and $6$ only)
known to mathematical chemists as ``\emph{capped $(5,0)$ zigzag nanotubes"} has more Hamilton cycles than the generalized Petersen graph $P(2n,2)$ whenever the number of vertices is a multiple of $20$ greater than $20$.

In order to count the numbers of Hamilton cycles in the nanotubes and related graphs, we use methods inspired by the transfer matrix methods of statistical physics. In addition to counting the number of Hamilton cycles in our particular class of nanotubes, we give a general procedure that could -- in principle -- be carried out for nanotubes of any fixed moderate width. For $n \in \{5,6\}$, the graphs $P(2n,2)$ are themselves fullerenes, but as they have two faces of length $n$, they look less and less fullerene-like as $n$ increases. It is therefore somewhat surprising that the graphs with more Hamilton cycles than $P(2n,2)$ are themselves fullerenes.  However it turns out that nanotubes are not the \emph{only} fullerenes that have more Hamilton cycles than the generalized Petersen graphs, as our computations revealed an additional example on $56$ vertices. We suspect that this graph lies in another infinite family of fullerenes particularly rich in Hamilton cycles, but we have not yet been able to describe this family in general.

%
%
%

Computationally, it is relatively straightforward to construct planar cubic cyclically $k$-edge-connected graphs using Brinkmann \& McKay's program {\tt plantri} \cite{MR2357364} and then count their Hamilton cycles. (The numbers of such graphs can be found in Sloane's Encyclopaedia of Integer Sequences (\cite{OEIS}) at \url{http://oeis.org/A007021} for $k=4$ and \url{http://oeis.org/A006791} for $k=5$.) We performed these computations for all the graphs on up to $42$ vertices for $k=4$, and up to $64$ vertices for $k=5$. The results of these computations are given in Table~\ref{tab:plancubcyc4con} and Table~\ref{tab:plancubcyc5con}. Note that while the total numbers of graphs includes graphs that are non-Hamiltonian, the column ``Min'' lists the minimum number of Hamilton cycles in a Hamiltonian graph in that family.
(We note that Goedgebeur, Meersman and Zamfirescu \cite{1812.05650} have more extensive computations for this class of graphs in their investigation of graphs with few Hamilton cycles.)

%
%

\begin{table}
\begin{center}
\begin{tabular}{rrrr}
\toprule
$n$ & No. graphs & Min & Max \\
\midrule
$10$ & $1$ & $5$ &$5$ \\
$12$ & $2$ & $7$ & $8$ \\
$14$ & $4$ & $6$ & $9$\\
$16$ & $10$ & $6$ & $14$\\
$18$ & $25$ & $8$ & $20$\\
\midrule
$20$ & $87$ & $9$ & $30$\\
$22$ & $313$ & $10$ & $32$\\
$24$ & $1357$ & $11$ & $48$\\
$26$ & $6244$ & $10$ & $56$\\
$28$ & $30926$ & $12$ & $80$\\
\midrule
$30$ & $158428$ & $10$ & $112$\\
$32$ & $836749$ & $12$ & $148$\\
$34$ & $4504607$ & $8$ & $199$ \\
$36$ & $24649284$ & $8$ & $276$ \\
$38$ & $136610879$ & $4$ & $368$\\
\midrule
$40$ & $765598927$ & $8$ & $542$\\
$42$ & $4332047595$ &  $4$ & $717$ \\
\bottomrule
\end{tabular}
\end{center}
\caption{Hamilton cycles in planar cubic cyclically $4$-edge-connected graphs}
\label{tab:plancubcyc4con}
\end{table}

\begin{table}
\begin{tabular}{crrrc}
\toprule
$n$ & No. graphs& Min & Max & Most cycles\\
\midrule
$20$& $1$ & $30$ & $30$&$P(10,2)$\\
$22$& $0$\\
$24$&$1$&$34$&$34$&$P(12,2)$\\
$26$&$1$&$24$&$24$\\
$28$&$3$&$18$&$56$&$P(14,2)$\\
$30$&$4$&$20$&$52$\\
\midrule
$32$&$12$&$30$&$108$&$P(16,2)$ \\
$34$&$23 $&$28$&$100$ \\
$36$& $71$&$24$&$150$&$P(18,2)$\\
$38$&$187$&$24$&$168$ \\
$40$&$627$ &$32$&$280$&$N(5,3)$\\
\midrule
$42$&$1970$&$28$&$244$ \\
$44$&$6833$ &$28$&$418$&$P(22,2)$\\
$46$&$23384$&$36$&$390$ \\
$48$&$82625$&$32$&$642$&$P(24,2)$ \\
$50$&$292164$&$32$&$780$ \\
\midrule
$52$&$1045329$&$16$&$1040$ & $P(26,2)$ \\
$54$&$3750277$ &$16$&$1120$\\
$56$&$13532724$ & $16$ &$1746$ & ? \\
$58$&$ 48977625$ & $16$ & $1928$ &  \\
$60$&$177919099$& $16$ & $3040$ & $N(5,5)$\\
\midrule
$62$&$648145255$ & $16$ & $3540$ & \\
$64$&$2368046117$ & $16$ & $4412$ & $P(32,2)$\\
\bottomrule
\end{tabular}
\caption{Hamilton cycles in planar cubic cyclically $5$-edge-connected graphs}
\label{tab:plancubcyc5con}
\end{table}

Given that we have already computed the necessary data, we also consider the ``other extreme'' for each of these two classes of planar cubic graphs. In other words, we consider the cyclically $4$-edge connected planar cubic graphs with the \emph{most} Hamilton cycles in \cref{cub4conmany}, and the cyclically $5$-edge connected planar cubic graphs with the \emph{fewest} Hamilton cycles in \cref{cub5confew}.

\section{Planar cubic cyclically \texorpdfstring{$4$}{4}-edge-connected graphs with few Hamilton cycles} 
\label{cub4confew}

The first class of graphs that we consider is the class of planar cubic cyclically $4$-edge-connected graphs, focussing first on those with few Hamilton cycles thereby addressing the first question from Chia and Thomassen:  \emph{``Does every planar cubic cyclically $4$-edge-connected graph on $n$ vertices have at least $n/2$ longest cycles?''}. We note that this question is essentially asking whether there is a lower bound on the number of Hamilton cycles that is \emph{linear} in the number of vertices. Table~\ref{tab:plancubcyc4con} shows that this is not true for most of the values of $n$ within our computational range, and in this section we show that this continues indefinitely by constructing an infinite family of planar cubic cyclically $4$-edge-connected graphs, each with a \emph{constant} number of Hamilton cycles. 
More precisely, we exhibit a $38$-vertex planar cubic cyclically $4$-edge-connected graph with exactly four Hamilton cycles that acts as a ``starter'' graph. We then show that this graph contains special configurations that can be extended by the addition of any multiple of $4$ vertices in such a way that each graph constructed is cubic, planar, cyclically $4$-edge-connected, and has the same number of Hamilton cycles as the starter graph. 

As noted previously, this has been independently proved by Goedgebeur, Meersman and Zamfirescu \cite{1812.05650}.

The construction adding four vertices proceeds as follows: 
Let $C$ be an induced 4-cycle in a graph $G$, with edges $v_1v_2$, $v_2v_3$, $v_3v_4$, and $v_4v_1$. Construct a graph $H$ from $G$ by 
replacing the edge $v_1v_2$ with a path $v_1x_1x_2v_2$ and replacing the edge $v_3v_4$ with a path $v_3y_2y_1v_4$, where $x_1,x_2,y_1,y_2$ are new vertices. Finally, add edges $x_1y_1$ and $x_2y_2$. Then we say that $H$ is obtained from $G$ by a \emph{ladder extension} on edges $v_1v_2$ and $v_3v_4$; we are extending a ``$2$-rung ladder'' to a $4$-rung ladder.
The next lemma shows that --- under certain circumstances --- performing a ladder extension does not alter the number of Hamilton cycles in the graph.

\begin{lemma}
Suppose that $C$ is an induced $4$-cycle in a graph $G$ with vertices $\{v_1,v_2,v_3,v_4\}$ (as in Figure~\ref{fig:ladext}), that all the vertices of $C$ have degree 3 in $G$, and that $G \mathbin{\backslash} \{v_1v_2,v_3v_4\}$ is not Hamiltonian. Furthermore, let $H$ be the graph obtained from $G$ by a ladder extension on $v_1v_2$ and $v_3v_4$. Then there is a bijection between the Hamilton cycles of $G$ and the Hamilton cycles of $H$. 
\end{lemma}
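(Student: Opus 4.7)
The plan is to classify the Hamilton cycles of $G$ and of $H$ according to how they interact with the ``gadget region''---the $4$-cycle $C$ in $G$ or the $4$-rung ladder $L$ spanned by $\{v_1,v_2,v_3,v_4,x_1,x_2,y_1,y_2\}$ in $H$---and then to match the two classifications.

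Let $e_i$ denote the unique edge from $v_i$ to a vertex outside $C$ (which exists because each $v_i$ has degree $3$ and $C$ is induced). A Hamilton cycle uses exactly two of the three edges at each $v_i$, so a short parity argument---summing the contributions of the cycle edges of $C$ to the degrees at the four corners---forces the number of external edges used to lie in $\{0,2,4\}$, and the value $0$ is impossible since $C$ would then become a subcycle. Enumerating the remaining cases yields exactly six \emph{corner-pairing patterns}, describing which of the $e_i$ are used and which pairs of corners are linked via the gadget part of the cycle: the four patterns $A, B, C, D$ each use two externals $e_i, e_j$ with $v_iv_j \in E(C)$ and link $v_i \leftrightarrow v_j$ through the length-three path of $C$ obtained by omitting $v_iv_j$; pattern $E$ uses all four externals together with the cycle edges $\{v_1v_2, v_3v_4\}$; and pattern $F$ uses all four externals together with $\{v_1v_4, v_2v_3\}$.

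Crucially, $F$ is the unique pattern avoiding both $v_1v_2$ and $v_3v_4$, so a Hamilton cycle of $G$ realising $F$ would also be a Hamilton cycle of $G \setminus \{v_1v_2, v_3v_4\}$, and the hypothesis forbids this. I would then repeat the analysis in $H$ by enumerating the path decompositions of $L$ whose non-corner vertices have internal degree $2$ and whose endpoints all lie among $\{v_1,v_2,v_3,v_4\}$. The ladder $L$ is bipartite with parts $\{v_1, v_3, x_2, y_1\}$ and $\{v_2, v_4, x_1, y_2\}$, which immediately rules out any decomposition pairing two same-coloured corners; a direct case analysis then shows that each of $A, B, C, D, E$ admits a unique realisation in $L$, while $F$ admits exactly three. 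However, the external graph $H - V(L)$ coincides with $G - V(C)$, and an external completion compatible with $F$ in $H$ would---combined with the cycle edges $v_1v_4, v_2v_3$ still present in $G$---yield a Hamilton cycle of $G \setminus \{v_1v_2, v_3v_4\}$, so the hypothesis also eliminates all pattern-$F$ cycles in $H$.

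With the pattern-$F$ contributions gone on both sides, a Hamilton cycle in either graph is determined by a pattern in $\{A,B,C,D,E\}$, its unique gadget realisation in that pattern, and an external completion in the common graph $G - V(C) = H - V(L)$, and the compatibility conditions imposed on the external completion are identical in both cases. The map sending a Hamilton cycle of $G$ to the Hamilton cycle of $H$ with the same pattern and the same external completion is then the claimed bijection. The main obstacle is the ladder-decomposition enumeration: one must verify carefully that the three pattern-$F$ realisations in $L$ are correctly accounted for (so that they cancel against the hypothesis) and that no decomposition of a different type has been missed, with the bipartite structure of $L$ as the central tool.
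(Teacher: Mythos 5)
Your proposal is correct and follows essentially the same route as the paper: both classify Hamilton cycles by their intersection with the $4$-cycle (resp.\ the $4$-rung ladder), match the configurations across the two graphs, and use the non-Hamiltonicity of $G \setminus \{v_1v_2, v_3v_4\}$ to eliminate exactly the three ladder decompositions that pair $v_1$ with $v_4$ and $v_2$ with $v_3$. The only quibble is that bipartiteness alone does not ``immediately'' exclude the two-path diagonal pairing $\{v_1v_3 \mid v_2v_4\}$ (the parity count there is consistent), but the direct case analysis you invoke anyway disposes of it.
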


\begin{proof}
We start by considering a Hamilton cycle $D$ of $G$. Then either $D$ uses both $v_1v_2$ and $v_3v_4$ or, without loss of generality, $E(D)\cap E(C)=\{v_4v_1,v_1v_2, v_2v_3\}$. In the first case,  replacing the edge $v_1v_2$ with the path $v_1x_1x_2v_2$ and the edge $v_3v_4$ with the path $v_3 y_2 y_1 v_4$ yields a Hamilton cycle of $H$. Moreover, all Hamilton cycles of $H$ using these two paths arise from Hamilton cycles of $G$ in this way. In the second case, replacing the edge $v_1v_2$ with the path $v_1x_1y_1y_2x_2v_2$ yields a Hamilton cycle of $H$. Moreover, all Hamilton cycles of $H$ using this path arise from Hamilton cycles of $G$ in this way.

Up to symmetry, any further Hamilton cycle in $H$ would necessarily use the paths $v_1x_1y_1v_4$ and $v_2x_2y_2v_3$ or use the path $v_1x_1x_2y_2y_1v_4$ and edge $v_2v_3$. In either of these two cases, this Hamilton cycle would correspond to a Hamilton cycle of $G$ using only the edges $v_1v_4$ and $v_2v_3$ of $C$, a contradiction to the condition that $G \mathbin{\backslash} \{v_1v_2,v_3v_4\}$ is not Hamiltonian.   \end{proof}

We call a $4$-cycle \emph{extendable} if it meets the hypotheses of the lemma, and note that after performing a ladder extension, the newly-created $4$-cycles in $H[v_1,v_2,v_3,v_4,x_1,x_2,y_1,y_2]$ are also extendable. Therefore the process can be repeated, extending a $4$-cycle to an arbitrarily long ladder (provided the total number of rungs is even), thus creating an infinite family of graphs with the same number of Hamilton cycles as the original graph.

\begin{figure}
\begin{center}
\begin{tikzpicture}[scale=1.5]
\tikzstyle{vertex}=[circle, draw=black, fill=black!15!white, inner sep = 0.65mm]
\node [vertex,label={270:{\small $v_1$}}] (v1) at (0,0) {};
\node [vertex,label={270:{\small $v_2$}}] (v2) at (1,0) {};
\node [vertex,label={90:{\small $v_3$}}] (v3) at (1,1) {};
\node [vertex,label={90:{\small $v_4$}}] (v4) at (0,1) {};
\draw (v1)--(v2)--(v3)--(v4)--(v1);
\draw [vertex] (v1) -- ++(-0.3,0);
\draw [vertex] (v2) -- ++(0.3,0);
\draw [vertex] (v3) -- ++(0.3,0);
\draw [vertex] (v4) -- ++(-0.3,0);
\node at (-0.5,0.5) {$G$};

\draw [thick, ->] (1.75,0.5)--(2.25,0.5);
\pgftransformxshift{3cm}
\node [vertex,label={270:{\small $v_1$}}](v1) at (0,0) {};
\node [vertex,label={270:{\small $v_2$}}] (v2) at (1.5,0) {};
\node [vertex,label={90:{\small $v_3$}}] (v3) at (1.5,1) {};
\node [vertex,label={90:{\small $v_4$}}] (v4) at (0,1) {};

\node [vertex,label={270:{\small $x_1$}}](x1) at (0.5,0) {};
\node [vertex,label={270:{\small $x_2$}}](x2) at (1,0) {};
\node [vertex,label={90:{\small $y_1$}}](y1) at (0.5,1) {};
\node [vertex,label={90:{\small $y_2$}}](y2) at (1,1) {};
\draw (v1)--(x1)--(x2)--(v2)--(v3)--(y2)--(y1)--(v4)--(v1);
\draw [vertex] (v1) -- ++(-0.3,0);
\draw [vertex] (v2) -- ++(0.3,0);
\draw [vertex] (v3) -- ++(0.3,0);
\draw [vertex] (v4) -- ++(-0.3,0);
\draw (x1)--(y1);
\draw (x2)--(y2);
\node at (2,0.5) {$H$};
\end{tikzpicture}
\end{center}
\caption{Ladder extension on an induced $4$-cycle}
\label{fig:ladext}
\end{figure}
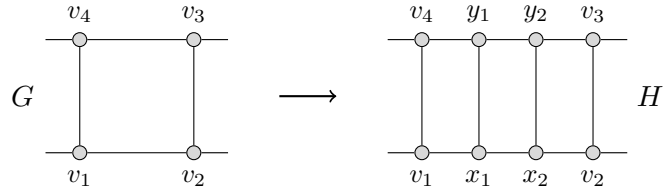

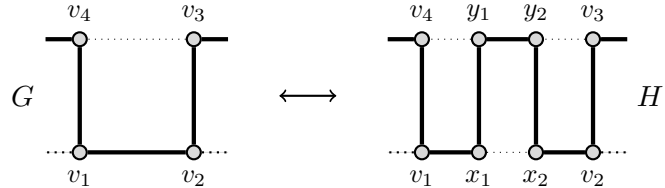
\begin{figure}
\begin{center}
\begin{tikzpicture}[scale=1.5]
\tikzstyle{vertex}=[circle, draw=black, thick, fill=black!15!white, inner sep = 0.65mm]
\node [vertex,label={270:{\small $v_1$}}] (v1) at (0,0) {};
\node [vertex,label={270:{\small $v_2$}}] (v2) at (1,0) {};
\node [vertex,label={90:{\small $v_3$}}] (v3) at (1,1) {};
\node [vertex,label={90:{\small $v_4$}}] (v4) at (0,1) {};
\draw  [dotted]  (v1)--(v2)--(v3)--(v4)--(v1);
\draw [ dotted,vertex] (v1) -- ++(-0.3,0);
\draw [dotted,vertex] (v2) -- ++(0.3,0);
\draw [dotted,vertex] (v3) -- ++(0.3,0);
\draw [dotted,vertex] (v4) -- ++(-0.3,0);
\node at (-0.5,0.5) {$G$};

\draw [thick, <->] (1.75,0.5)--(2.25,0.5);

\draw [ultra thick] (v4)--(v1)--(v2)--(v3);
\draw [ultra thick] (v4) -- ++(-0.3,0);
\draw [ultra thick] (v3) -- ++(0.3,0);

\pgftransformxshift{3cm}
\node [vertex,label={270:{\small $v_1$}}](v1) at (0,0) {};
\node [vertex,label={270:{\small $v_2$}}] (v2) at (1.5,0) {};
\node [vertex,label={90:{\small $v_3$}}] (v3) at (1.5,1) {};
\node [vertex,label={90:{\small $v_4$}}] (v4) at (0,1) {};

\node [vertex,label={270:{\small $x_1$}}](x1) at (0.5,0) {};
\node [vertex,label={270:{\small $x_2$}}](x2) at (1,0) {};
\node [vertex,label={90:{\small $y_1$}}](y1) at (0.5,1) {};
\node [vertex,label={90:{\small $y_2$}}](y2) at (1,1) {};
\draw [dotted] (v1)--(x1)--(x2)--(v2)--(v3)--(y2)--(y1)--(v4)--(v1);
\draw [dotted,vertex] (v1) -- ++(-0.3,0);
\draw [dotted,vertex] (v2) -- ++(0.3,0);
\draw [dotted,vertex] (v3) -- ++(0.3,0);
\draw [dotted,vertex] (v4) -- ++(-0.3,0);
\draw [dotted] (x1)--(y1);
\draw [dotted] (x2)--(y2);

\draw [ultra thick] (v4)--(v1)--(x1)--(y1)--(y2)--(x2)--(v2)--(v3);
\draw [ultra thick] (v4) -- ++(-0.3,0);
\draw [ultra thick] (v3) -- ++(0.3,0);

\node at (2,0.5) {$H$};
\end{tikzpicture}
\end{center}
\caption{Corresponding Hamilton cycles in $G$ and $H$}
\label{fig:ladrep}
\end{figure}

\begin{theorem}
For all $n \geqslant 38$ such that $n \equiv 2 \pmod{4}$, there is a planar cubic cyclically $4$-edge-connected graph on $n$
vertices with exactly four Hamilton cycles. 
\end{theorem}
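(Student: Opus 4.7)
The plan is to combine a computer-verified base graph with iterated applications of the ladder extension lemma just proved. First, I would exhibit an explicit planar cubic cyclically $4$-edge-connected graph $G_0$ on $38$ vertices that has exactly four Hamilton cycles and contains at least one extendable induced $4$-cycle $C_0 = v_1 v_2 v_3 v_4$. Existence of such a $G_0$ is a finite computational claim consistent with the $n = 38$ entry of Table~\ref{tab:plancubcyc4con} (where the minimum equals $4$), and it can be verified from first principles by displaying $G_0$ as a planar embedding together with a list of its four Hamilton cycles and a specified $C_0$; the non-Hamiltonicity of $G_0 \setminus \{v_1 v_2, v_3 v_4\}$ then reduces to confirming that none of those four Hamilton cycles uses exactly the two ``side'' edges $v_1 v_4$ and $v_2 v_3$ of $C_0$.

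Given $G_0$, I would construct $G_k$ for $k \geq 1$ by applying a ladder extension to some extendable $4$-cycle of $G_{k-1}$. The remark after the lemma ensures that the newly created $4$-cycles inside the freshly added ladder are themselves extendable, so the iteration never stalls. Each step adds four vertices, so $|V(G_k)| = 38 + 4k$ ranges over all values $n \equiv 2 \pmod 4$ with $n \geq 38$. Planarity and cubicness are immediate because the extension is local and can be drawn inside the face of $G_{k-1}$ bounded by the $4$-cycle being extended, and the preceding lemma guarantees that each $G_k$ has exactly four Hamilton cycles.

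The only nontrivial remaining step is to verify that cyclic $4$-edge-connectivity is preserved under a ladder extension, which I would prove by induction on $k$. Given a putative cyclic edge cut $F$ of $G_k$ with $|F| \leq 3$, let $L$ denote the ladder created at the last step, together with the four external edges joining its corners to the rest of $G_k$. If $F$ avoids all internal edges of $L$, then contracting $L$ back to a $4$-cycle produces a cyclic edge cut of $G_{k-1}$ of the same size, contradicting the inductive hypothesis. Otherwise $F$ contains at least one ladder edge, and a case analysis exploiting that $L$ is internally well-connected shows that the piece of $L$ separated off by $F$ can only be a path or a tree, making the corresponding side of the cut acyclic and contradicting cyclicity of $F$. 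The main obstacle I foresee is precisely this case analysis: because the ladder lengthens with each iteration, a genuinely uniform argument is needed. The cleanest approach I see is to formalize the ``contract $L$ to a $4$-cycle'' map and prove once and for all that every edge set of size at most three meeting $E(L)$ either pulls back to a cyclic cut of $G_{k-1}$ of size at most three or isolates an acyclic piece of $L$; together with the finite check on the base graph $G_0$, this would complete the theorem for all $n \equiv 2 \pmod{4}$ with $n \geq 38$.
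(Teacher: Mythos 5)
Your proposal takes essentially the same route as the paper: the paper exhibits a $38$-vertex base graph (verified by computer to have exactly four Hamilton cycles, all of which use both rungs of each of its two induced $4$-cycles, so both are extendable) and then iterates the ladder extension to add four vertices at a time. The only difference is that you explicitly flag and sketch the preservation of cyclic $4$-edge-connectivity under a ladder extension, a point the paper's proof passes over in silence; your contraction-based induction for that step is sound, since any piece of the new ladder separated off by at most three edges is necessarily acyclic.
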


\begin{proof}
It can be verified, most easily by computer, that the graph of Figure~\ref{basegraph38} has  exactly four Hamilton cycles.  In fact, as illustrated in the second figure, $30$ edges lie in all four Hamilton cycles and $11$ lie in no Hamilton cycles, leaving just $8$ out of the remaining $16$ edges to (potentially) complete a Hamilton cycle. These ``undecided'' edges (drawn dashed in the figure) fall into two $8$-cycles, and it is easy to see that a Hamilton cycle must use exactly four pairwise disjoint edges (i.e. a perfect matching) from each of these $8$-cycles. As each cycle has exactly two perfect matchings, there are only four choices for the additional $8$ edges, 
and all four of the possible choices yield a Hamilton cycle. 

The graph has two $4$-cycles and as every Hamilton cycle uses the two (roughly) vertical edges in each of those, 
both of the $4$-cycles are extendable, and so can be extended to ladders of arbitrary even length.
\end{proof}


As the graph of Figure~\ref{basegraph38} has two extendable $4$-cycles and each can independently be extended to a ladder with an arbitrary (even) number of rungs, this gives to a slowly increasing number of examples as the number of vertices increases. We expect that there will be another graph on $n \equiv 0 \pmod{4}$ vertices with exactly four Hamilton cycles and extendable cycles. 

Goedgebeur, Meersman and Zamfirescu \cite{1812.05650} report that there are planar cubic cyclically $4$-edge connected graphs on $48$ vertices with exactly four Hamilton cycles.

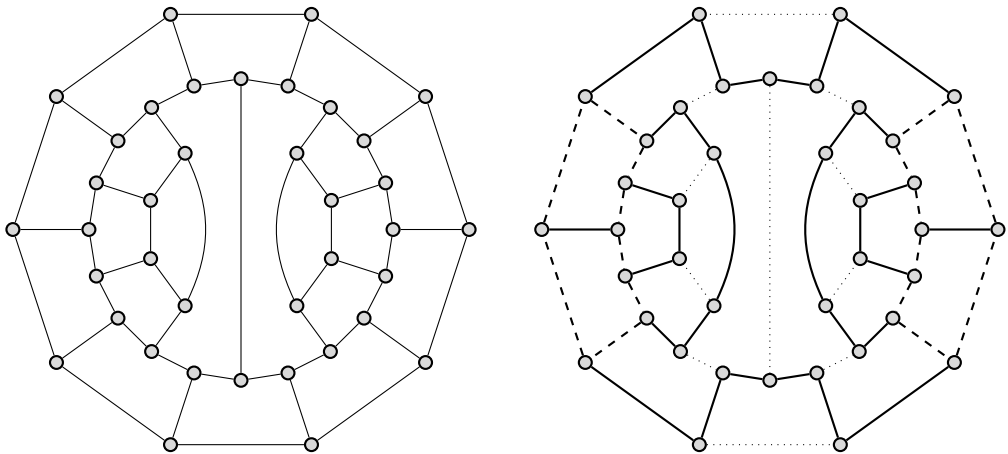
\begin{figure}
\begin{center}
\begin{tikzpicture}
\tikzstyle{vertex}=[circle, draw=black, thick, fill=black!15!white, inner sep = 0.6mm]
\foreach \x in {0,1,...,19} {
\node [vertex] (a\x) at (18*\x:2cm) {};
}
\foreach \x in {0,2,4,6,8,10,12,14,16,18} {
 \node [vertex] (b\x) at (18*\x:3cm) {};
 \draw (a\x)--(b\x);
}
\foreach \x in {1,3,7,9,11,13,17,19} {
\node[vertex] (c\x) at (18*\x:1.25cm) {};
\draw (a\x)--(c\x);
}

\draw (a0)--(a1)--(a2)--(a3)--(a4)--(a5)--(a6)--(a7)--(a8)--(a9)--(a10)--(a11)--(a12)--(a13)--(a14)--(a15)--(a16)--(a17)--(a18)--(a19)--(a0);

\draw (b0)--(b2)--(b4)--(b6)--(b8)--(b10)--(b12)--(b14)--(b16)--(b18)--(b0);

\draw (a5)--(a15);

\draw (c7)--(c9)--(c11)--(c13);
\draw (c17)--(c19)--(c1)--(c3);
\draw [bend left = 25] (c7) to (c13);
\draw [bend left = 25] (c17) to (c3);
 
 \end{tikzpicture}
 \hspace{0.5cm}
 \begin{tikzpicture}
\tikzstyle{vertex}=[circle, draw=black, thick, fill=black!15!white, inner sep = 0.6mm]
\foreach \x in {0,1,...,19} {
\node [vertex] (a\x) at (18*\x:2cm) {};
}
\foreach \x in {0,2,4,6,8,10,12,14,16,18} {
 \node [vertex] (b\x) at (18*\x:3cm) {};
}
\foreach \x in {1,3,7,9,11,13,17,19} {
\node[vertex] (c\x) at (18*\x:1.25cm) {};
}

%

\draw [thick, black] (a8)--(a7)--(c7);
\draw [thick, black, bend left = 25] (c7) to (c13);
\draw [thick, black] (c13)--(a13)--(a12);
\draw [thick, black] (a9)--(c9)--(c11)--(a11);
\draw [thick, black] (a10)--(b10);

\draw [thick, black] (a2)--(a3)--(c3);
\draw [thick, black, bend left = 25] (c17) to (c3);
\draw [thick, black] (c17)--(a17)--(a18);
\draw [thick, black] (a1)--(c1)--(c19)--(a19);
\draw [thick, black] (a0)--(b0);

\draw [thick, black] (b2)--(b4)--(a4)--(a5)--(a6)--(b6)--(b8);
\draw [thick, black] (b12)--(b14)--(a14)--(a15)--(a16)--(b16)--(b18);

\draw [dotted] (b4)--(b6);
\draw [dotted] (a3)--(a4);
\draw [dotted] (a6)--(a7);
\draw [dotted] (c7)--(c9);
\draw [dotted] (b14)--(b16);
\draw [dotted] (c1)--(c3);
\draw [dotted] (a16)--(a17);
\draw [dotted] (a13)--(a14);
\draw [dotted] (c13)--(c11);
\draw [dotted] (a5)--(a15);
\draw [dotted] (c17)--(c19);

\draw [thick, dashed] (b2)--(a2)--(a1)--(a0)--(a19)--(a18)--(b18)--(b0)--(b2);
\draw [thick, dashed] (b8)--(a8)--(a9)--(a10)--(a11)--(a12)--(b12)--(b10)--(b8);
 
 \end{tikzpicture}
\end{center}
\caption{On the left, a cyclically $4$-edge-connected cubic planar graph with four Hamilton cycles. 
On the right, edges are coded black, dashed and dotted according as they lie in all, half or none of
the Hamilton cycles respectively.}
\label{basegraph38}
\end{figure}


\section{Planar cubic cyclically \texorpdfstring{$4$}{4}-edge-connected graphs with many Hamilton cycles}
\label{cub4conmany}

In this section, we consider planar cubic cyclically $4$-edge connected graphs with \emph{many} Hamilton cycles. The computational evidence reveals definite patterns in the extremal graphs in this class, although probably too complicated for an exact characterisation. We content ourselves with identifying a family of graphs with many Hamilton cycles that we believe to be extremal infinitely often, when $n$ satisfies certain parity conditions.

A \emph{$k$-rung ladder} (or just \emph{$k$-ladder} for short) is the Cartesian product $P_k \mathbin{\square} K_2$ of a $k$-vertex path $P_k$ with $K_2$, which we will assume is  labelled as in \cref{fig:4rungladder}. Borrowing terminology from the physical ladders found in hardware stores, the two copies of $P_k$ are called the \emph{rails} of the ladder (so each ladder has a $u$-rail and a $v$-rail) and the \emph{rungs} are the edges connecting the rails.

\begin{figure}
\begin{center}
\begin{tikzpicture}
\tikzstyle{vertex}=[circle, draw=black, thick, fill=black!15!white, inner sep = 0.6mm]
\foreach \x in {1,2,...,4 }{
  \node [vertex, label=0:$v_\x$] (v\x) at (1,3-0.75*\x) {};
  \node [vertex, label=180:$u_\x$] (u\x) at (0,3-0.75*\x) {};
  \draw (u\x)--(v\x);
}
\draw (u1)--(u2)--(u3)--(u4);
\draw (v1)--(v2)--(v3)--(v4);

\pgftransformxshift{6cm}
\pgftransformyshift{1.5cm}

\foreach \x in {0,1,...,9} {
  \foreach \y in {0,1,2,3} {
    \node [vertex] (v\x\y) at (36*\x:1+0.5*\y) {};
  }
}
\draw (v00)--(v10)--(v20)--(v30)--(v40)--(v50)--(v60)--(v70)--(v80)--(v90)--(v00);
\draw (v03)--(v13)--(v23)--(v33)--(v43)--(v53)--(v63)--(v73)--(v83)--(v93)--(v03);

\foreach \x in {0,1,...,9} {
  \draw (v\x0)--(v\x1)--(v\x2)--(v\x3);
}

\foreach \x/\y in {2/3, 4/5, 6/7, 8/9, 0/1} {
\draw (v\x1)--(v\y1);
\draw (v\x2)--(v\y2);
}

\foreach \x in {0,1,...,4} {
\node at (90+72*\x:2.7cm) {$L_\x$};
}
\end{tikzpicture}
\end{center}
\caption{A $4$-rung ladder and a $5$-ring of $4$-ladders}
\label{fig:4rungladder}
\end{figure}
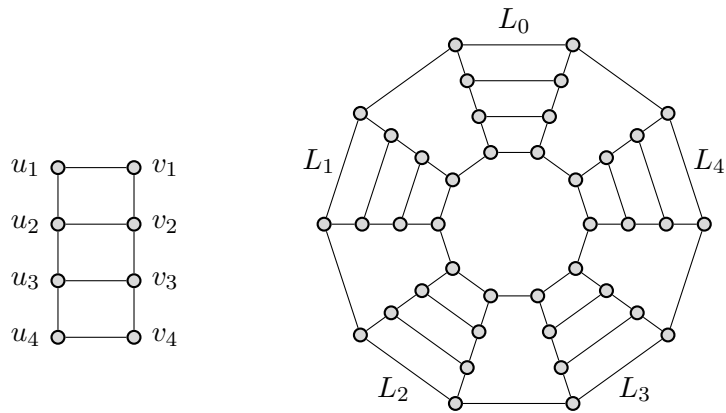

For $m$, $k\geqslant 2$, we define an \emph{$m$-ring of $k$-ladders}, denoted $\mathrm{RL}(m,k)$, to be the graph obtained by connecting $m$ disjoint $k$-ladders $L_0$, $L_1$, $\ldots$, $L_{m-1}$ in a cycle. Adjacent ladders in the circular order are joined by adding edges connecting the vertices $u_1$ and $u_k$ in $L_i$ to the vertices $v_1$ and $v_k$ (respectively) in $L_{i+1}$. (This construction is most easily understood by glancing at Figure~\ref{fig:4rungladder}.)  The graph $\mathrm{RL}(m,k)$ is a planar cubic cyclically $4$-edge connected graph on $2mk$ vertices.


%
%
%
%

%
%
%
%

\begin{figure}
\begin{center}
\begin{tikzpicture}[scale=0.8] 
\tikzstyle{vertex}=[circle, draw=black, thick, fill=black!15!white, inner sep = 0.6mm]
\foreach \x in {0,1,...,9} {
  \foreach \y in {0,1,2,3} {
    \node [vertex] (v\x\y) at (36*\x:1+0.6*\y) {};
  }
}
\draw [thick, dotted] (v00)--(v10)--(v20)--(v30)--(v40)--(v50)--(v60)--(v70)--(v80)--(v90)--(v00);
\draw [thick, dotted] (v03)--(v13)--(v23)--(v33)--(v43)--(v53)--(v63)--(v73)--(v83)--(v93)--(v03);


\foreach \x in {0,1,...,9} {
  \draw [thick, dotted] (v\x0)--(v\x1)--(v\x2)--(v\x3);
}

\foreach \x/\y in {2/3, 4/5, 6/7, 8/9, 0/1} {
\draw [thick, dotted] (v\x1)--(v\y1);
\draw [thick, dotted] (v\x2)--(v\y2);
}

\draw [ultra thick] (v00)--(v01)--(v02)--(v12)--(v11)--(v10)--(v20)--(v21)--(v31)--(v30)--(v40)--(v41)--(v42)--(v43)--(v33)--(v32)--(v22)--(v23)--(v13)--(v03)--(v93)--(v92)--(v91)--(v81)--(v82)--(v83)--(v73)--(v63)--(v53)--(v52)--(v51)--(v50)--(v60)--(v61)--(v62)--(v62)--(v72)--(v71)--(v70)--(v80)--(v90)--(v00);

\end{tikzpicture}
\hspace{0.5cm}
\begin{tikzpicture}[scale=0.8] 
\tikzstyle{vertex}=[circle, draw=black, thick, fill=black!15!white, inner sep = 0.6mm]
\foreach \x in {0,1,...,9} {
  \foreach \y in {0,1,2,3} {
    \node [vertex] (v\x\y) at (36*\x:1+0.6*\y) {};
  }
}
\draw [thick, dotted] (v00)--(v10)--(v20)--(v30)--(v40)--(v50)--(v60)--(v70)--(v80)--(v90)--(v00);
\draw [thick, dotted] (v03)--(v13)--(v23)--(v33)--(v43)--(v53)--(v63)--(v73)--(v83)--(v93)--(v03);

\foreach \x in {0,1,...,9} {
  \draw  [thick, dotted] (v\x0)--(v\x1)--(v\x2)--(v\x3);
}

\foreach \x/\y in {2/3, 4/5, 6/7, 8/9, 0/1} {
\draw [thick, dotted] (v\x1)--(v\y1);
\draw [thick, dotted] (v\x2)--(v\y2);
}

\foreach \x in {0,1,...,9} {
\draw [ultra thick] (v\x0)--(v\x1)--(v\x2)--(v\x3);
}

\foreach \x/\y in {0/1, 2/3, 4/5, 6/7, 8/9} {
\draw [ultra thick]  (v\x3)--(v\y3);
}

\foreach \x/\y in {1/2, 3/4, 5/6, 7/8, 9/0} {
\draw [ultra thick]  (v\x0)--(v\y0);
}

\end{tikzpicture}
\hspace{0.5cm}
\begin{tikzpicture}[scale=0.8] 
\tikzstyle{vertex}=[circle, draw=black, thick,  fill=black!15!white, inner sep = 0.6mm]
\foreach \x in {0,1,...,9} {
  \foreach \y in {0,1,2,3} {
    \node [vertex] (v\x\y) at (36*\x:1+0.6*\y) {};
  }
}
\draw [thick, dotted] (v00)--(v10)--(v20)--(v30)--(v40)--(v50)--(v60)--(v70)--(v80)--(v90)--(v00);
\draw [thick, dotted] (v03)--(v13)--(v23)--(v33)--(v43)--(v53)--(v63)--(v73)--(v83)--(v93)--(v03);


\foreach \x in {0,1,...,9} {
  \draw [thick, dotted] (v\x0)--(v\x1)--(v\x2)--(v\x3);
}

\foreach \x/\y in {2/3, 4/5, 6/7, 8/9, 0/1} {
\draw [thick, dotted] (v\x1)--(v\y1);
\draw [thick, dotted] (v\x2)--(v\y2);
}

\draw [ultra thick] (v00)--(v10)--(v20)--(v30)--(v31)--(v21)--(v22)--(v32)--(v33)--(v23)--(v13)--(v12)--(v11)--(v01)--(v02)--(v03)--(v93)--(v92)--(v82)--(v83)--(v73)--(v63)--(v53)--(v43)--(v42)--(v52)--(v51)--(v41)--(v40)--(v50)--(v60)--(v61)--(v62)--(v72)--(v71)--(v70)--(v80)--(v81)--(v91)--(v90)--(v00);

\end{tikzpicture}

\end{center}
\caption{Types of Hamilton cycle in $\mathrm{RL}(5,4)$}
\label{fig:rl54hams}

\end{figure}
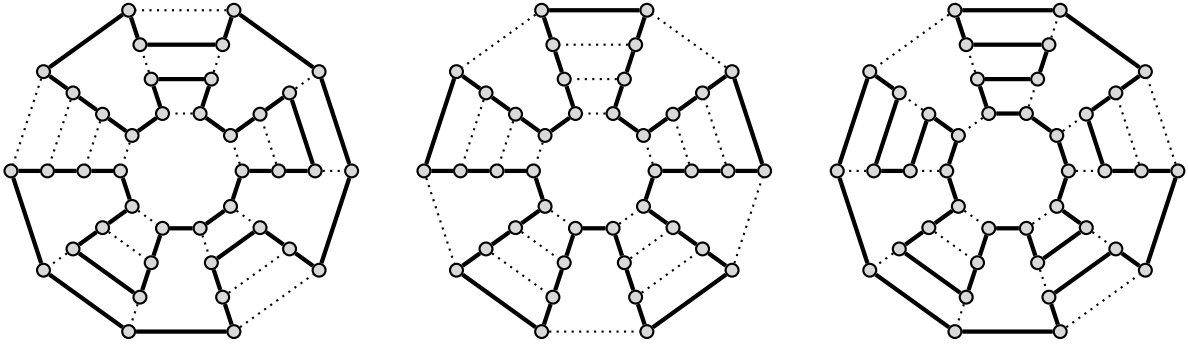

\begin{lemma}
The number of Hamilton cycles in an $m$-ring of $k$-ladders is:
\[
\mathrm{hc}(RL(m,k)) = 
\begin{cases}
2^m + m(k-1)^{m-1},& k \text{ odd}; \\
2 + m k (k-1)^{m-2}, & k \text{ even}.
\end{cases}
\]
\end{lemma}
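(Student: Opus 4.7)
The plan is to classify the Hamilton cycles of $\mathrm{RL}(m,k)$ by how they use the external edges joining adjacent ladders. For each interface $i$ (between $L_i$ and $L_{i+1}$), record $(t_i,b_i)\in\{0,1\}^2$ telling whether the top and bottom external edges belong to the cycle. Restricting a Hamilton cycle to $L_j$ yields a vertex-disjoint union of paths covering all $2k$ vertices, whose endpoints are exactly the boundary vertices of $L_j$ incident to used external edges; since path endpoints come in pairs, $t_{j-1}+b_{j-1}+t_j+b_j$ is even for every $j$, so all interfaces share a common parity of $t+b$, splitting the enumeration in two.

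In the odd-parity case each interface uses exactly one external edge; encoding the choice as $\epsilon_i\in\{T,B\}$, the ladder $L_j$ carries a single Hamilton path between two specified boundary vertices determined by $(\epsilon_{j-1},\epsilon_j)$. A bipartite parity argument on $P_k\mathbin{\square} K_2$ together with a degree-constraint propagation at the endpoints shows that the number of Hamilton paths in the $k$-ladder from $v_1$ to $u_1$ is always $1$, from $v_1$ to $u_k$ is $1$ if $k$ is odd and $0$ if $k$ is even (call it $c_k$), and from $v_1$ to $v_k$ is $1$ if $k$ is even and $0$ if $k$ is odd (call it $b_k$). Any choice of $\epsilon$ manifestly closes to a single cycle around the ring, so summing the product of the ladder counts over all $2^m$ choices evaluates by the binomial identity to $(1+c_k)^m+(1-c_k)^m$, which is $2^m$ for $k$ odd and $2$ for $k$ even.

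In the even-parity case every interface state is $(0,0)$ or $(1,1)$, and a standard edge-cut argument forbids two or more $(0,0)$ interfaces. With every interface $(1,1)$, each ladder has four active boundary vertices and hence two internal paths whose endpoint pairing is either \emph{horizontal} ($\{v_1,u_1\}\cup\{v_k,u_k\}$, admitting $k-1$ zigzag configurations indexed by the rung position of the first path) or \emph{vertical} ($\{v_1,v_k\}\cup\{u_1,u_k\}$, admitting exactly one configuration - the two rails); the diagonal pairing is ruled out by planarity, since the four corners lie in cyclic order $v_1,u_1,u_k,v_k$ on the outer face. A port-graph cycle-count lemma then asserts that, writing $s$ for the number of vertical ladders, the port graph has $2$ cycles when $s=0$, $1$ cycle when $s=1$, and $s$ cycles when $s\ge 2$; Hamiltonicity forces $s=1$, contributing $m(k-1)^{m-1}$ cycles. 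When exactly one interface is $(0,0)$, the two flanking ladders each carry a single Hamilton path between their $v$-rail (resp.\ $u$-rail) endpoints, contributing $b_k$ configurations each, while a parallel port-cycle analysis forces every other ladder to be horizontal, contributing $mb_k^2(k-1)^{m-2}$. Summing the three contributions gives $2^m+m(k-1)^{m-1}$ for $k$ odd and $2+m(k-1)^{m-1}+m(k-1)^{m-2}=2+mk(k-1)^{m-2}$ for $k$ even.

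The hard part is the port-graph cycle-count lemma. Toggling a ladder between its horizontal and vertical pairings is a $2$-edge swap in the underlying $2$-regular port graph, which changes the cycle count by $\pm 1$ according as the two swapped edges lie in a common port cycle or in different ones. Starting from $s=0$ with the inner and outer port cycles, a direct trace shows that the first vertical ladder merges them into one. The remaining work is to establish, by induction on $s$, the invariant that for $s\ge 1$ the four ports of any still-horizontal ladder all lie in the same port cycle, so that every subsequent toggle is a split producing $s\to s+1$ cycles; the inductive step follows by reading off the explicit cyclic order of ports produced by the previous toggle.
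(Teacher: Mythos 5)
Your proof is correct and follows essentially the same route as the paper: the same three-way case split on how many inter-ladder edges each interface carries, the same single-ladder counts (one $v_1$--$u_1$ Hamilton path, a diagonal path only for $k$ odd, a same-rail path only for $k$ even, and $k-1$ ``horizontal'' path-pairs versus one ``vertical'' rail-pair), and the same identification of a unique vertical ladder (resp.\ unique empty interface) forced by connectivity. The only substantive addition is your port-graph cycle-count lemma, whose statement is correct --- a direct trace shows that with vertical ladders at $s\ge 1$ positions the ring decomposes into exactly $s$ cycles, one for each consecutive pair of vertical ladders --- even though you only sketch its inductive proof; the paper simply asserts the corresponding connectivity fact without proof.
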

\begin{proof}
Let $L_0$, $L_1$, $\ldots$, $L_{m-1}$ be the $k$-ladders in $RL(m,k)$ and distinguish the vertices of $L_i$ with a superscript $i$, so that $v_j^i$ is the copy of $v_j$ in $L_i$.  Let $E(L_i, L_{i+1})$ denote the pair of edges connecting $L_i$ to $L_{i+1}$, with all ladder indices taken modulo $m$. For any ladder index $i$, the four edges $E(L_{i-1},L_i) \cup E(L_i,L_{i+1})$ form an edge-cut of the graph and so any Hamilton cycle $C$ uses an even non-zero number of these edges, therefore either $2$ or $4$.
Parity considerations show that there are three distinct cases to consider, according to the number of edges that a Hamilton cycle $C$ uses from each pair $E(L_i, L_{i+1})$.  

\smallskip
\noindent
{\bf Case 1:} $C$ uses both edges from each pair $E(L_i,L_{i+1})$ (first diagram in \cref{fig:rl54hams}).

For every $i$, the restriction of $C$ to $L_i$ either consists of two disjoint paths, one with endpoints $\{u_1^i, v_1^i\}$ and the second with endpoints $\{u_k^i, v_k^i\}$ or it consists of two paths with endpoints $\{u_1^i, u_k^i\}$ and $\{v_1^i, v_k^i\}$. In the first case, each of the two paths uses exactly one rung of $L_i$, and in order that these two paths use all the vertices of $L_i$, the two rungs are consecutive rungs in the ladder. In the second case, each path contains all the edges of one of the rails of the ladder and none of the rungs are used. These paths, along with the edges between consecutive ladders, form a Hamilton cycle if and only if there is exactly one index $j$ such that the restriction of $C$ to $L_j$ contains no rungs.  There are $m$ choices for this unique index $j$, but just one way to route the Hamilton cycle through $L_j$. For each of the $m-1$ other ladders, there are $k-1$ ways to select the two adjacent rungs to be used by $C$ and then the remaining edges are forced. This gives a total of $m(k-1)^{m-1}$ cycles of this type, independent of the parity of $k$.

\smallskip
\noindent
{\bf Case 2:} $C$ uses exactly one edge from each pair $E(L_i,L_{i+1})$ (second diagram in \cref{fig:rl54hams}).

If $C$ contains edges $(v_1^{i-1},u_1^i)$ and $(v_1^i,u_1^{i+1})$, then $C$ induces a $(u_1^i,v_1^i)$ Hamilton path in $L_i$. There is only one such Hamilton path of $L_i$, which uses all the rail edges and just the rung $(u_k^i, v_k^i)$. If $C$ contains edges $(v_1^{i-1},u_1^i)$ and $(v_k^i,u_k^{i+1})$, then $C$ induces a $(u_1^i, v_k^i)$ Hamilton path in $L_i$. If $k$ is even, then no such Hamilton path exists, while there is exactly one such Hamilton path if $k$ is odd.
Thus if $k$ is even then there are only two possibilities for $C$, one using every edge of the form $(v_1^{i-1},u_1^i)$ and $(v_1^i,u_1^{i+1})$ and the other using every  edge of the form $(v_k^{i-1},u_k^i)$ and $(v_k^i,u_k^{i+1})$. For odd $k$, there are $2^m$ possibilities for $C$, as an arbitrary choice of one of the two edges connecting each ladder to the next can be completed to a Hamilton cycle in a unique way. Thus there are either $2$ or $2^m$ Hamilton cycles of this type, depending on the parity of $k$.

\smallskip
\noindent
{\bf Case 3:} There is a unique index $j$ such that $C$ uses no edges from $E(L_j,L_{j+1})$, but both edges from every other pair $E(L_i,L_{i+1})$ with $i \ne j$ (third diagram in \cref{fig:rl54hams}).

In this case $C$ induces a $(v_1^j, v_k^j)$ Hamilton path in $L_j$ and a $(u_1^{j+1}, v_k^{j+1})$ Hamilton path in $L_{j+1}$. If $k$ is odd, then there is no such Hamilton path, but if $k$ is even, there is a unique suitable Hamilton path obtained by alternating rung-edge and rail-edge from the endpoints. For every ladder other than $L_j$ and $L_{j+1}$, the cycle $C$ induces a $(u_1^i, v_1^i)$ path and a disjoint $(u_k^i, v_k^i)$ path, and by the argument from Case 1, there are $k-1$ distinct choices for this pair of paths. Therefore the total number of choices for Hamilton cycles of this type is $m(k-1)^{m-2}$ if $k$ is even, and $0$ if $k$ is odd.

\smallskip
\noindent
Adding the numbers of Hamilton cycles of each type gives the stated result. 
\end{proof}

If the number of vertices is a multiple of $8$, say $8s$, then an $s$-ring of $4$-ladders has $2 + (4 s) 3^{s-2}$ Hamilton cycles, which we conjecture to be the maximum for sufficiently large $v$. In fact at $v=40$ the graph $RL(5,4)$ with $542$ Hamilton cycles is the unique extremal graph, and we expect this to continue for $v \geqslant 48$. However for $16$, $24$ and $32$ vertices there are seemingly sporadic graphs with more Hamilton cycles than the ring of $4$-ladders.

\section{Planar cubic cyclically \texorpdfstring{$5$}{5}-edge-connected graphs with few Hamilton Cycles}
\label{cub5confew}

After the expected initial variability, the data from \cref{tab:plancubcyc5con} shows that the minimum number of Hamilton cycles appears to settle down as the number of vertices increases. It is therefore tempting and plausible to conjecture that this is the minimum and that every Hamiltonian graph in this class has at least $16$ cycles. 

In total, there are $18$ planar cubic cyclically $5$-edge-connected graphs on $52$--$64$ vertices with exactly $16$ Hamilton cycles. (There are $3$, $2$, $3$, $1$, $1$, $2$ and $6$ graphs on $52$, $54$, $56$, $58$, $60$, $62$ and $64$ vertices respectively). However while they all look qualitatively ``similar'' to each other, it is difficult to discern any structural patterns with sufficient precision to be generalized.

Having nothing more to add, we therefore conclude this section by presenting two graphs from this collection in \cref{fig:plan5confew} in the hope that some reader of this paper can detect a pattern that has eluded us.

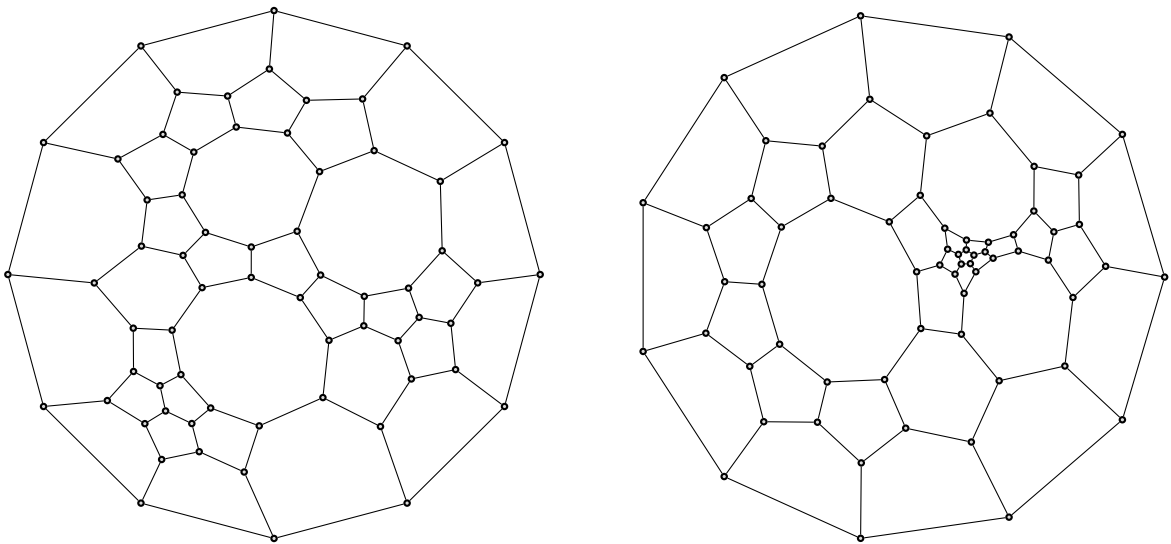
\begin{figure}
\begin{center}
\begin{tikzpicture}[scale=0.35]
\tikzstyle{vertex}=[circle, draw=black, thick, fill=black!15!white, inner sep = 0.25mm]
\node [vertex] (v0) at (-5.867,4.378) {};
\node [vertex] (v1) at (-8.660,5.000) {};
\node [vertex] (v2) at (-4.767,2.825) {};
\node [vertex] (v3) at (-4.174,5.309) {};
\node [vertex] (v4) at (-5.000,8.660) {};
\node [vertex] (v5) at (-10.000,0.000) {};
\node [vertex] (v6) at (-4.981,1.077) {};
\node [vertex] (v7) at (-3.452,3.020) {};
\node [vertex] (v8) at (-3.015,4.639) {};
\node [vertex] (v9) at (-3.639,6.910) {};
\node [vertex] (v10) at (0.000,10.000) {};
\node [vertex] (v11) at (-8.660,-5.000) {};
\node [vertex] (v12) at (-6.756,-0.319) {};
\node [vertex] (v13) at (-3.420,0.725) {};
\node [vertex] (v14) at (-2.576,1.596) {};
\node [vertex] (v15) at (-1.419,5.588) {};
\node [vertex] (v16) at (-1.744,6.762) {};
\node [vertex] (v17) at (-0.175,7.787) {};
\node [vertex] (v18) at (5.000,8.660) {};
\node [vertex] (v19) at (-5.000,-8.660) {};
\node [vertex] (v20) at (-6.265,-4.775) {};
\node [vertex] (v21) at (-5.289,-2.033) {};
\node [vertex] (v22) at (-2.702,-0.497) {};
\node [vertex] (v23) at (-0.855,1.042) {};
\node [vertex] (v24) at (0.504,5.362) {};
\node [vertex] (v25) at (1.218,6.600) {};
\node [vertex] (v26) at (3.327,6.651) {};
\node [vertex] (v27) at (8.660,5.000) {};
\node [vertex] (v28) at (0.000,-10.000) {};
\node [vertex] (v29) at (-4.222,-7.008) {};
\node [vertex] (v30) at (-4.854,-5.650) {};
\node [vertex] (v31) at (-5.279,-3.673) {};
\node [vertex] (v32) at (-3.830,-2.108) {};
\node [vertex] (v33) at (-0.857,-0.110) {};
\node [vertex] (v34) at (0.868,1.640) {};
\node [vertex] (v35) at (1.711,3.899) {};
\node [vertex] (v36) at (3.761,4.694) {};
\node [vertex] (v37) at (6.247,3.531) {};
\node [vertex] (v38) at (10.000,0.000) {};
\node [vertex] (v39) at (5.000,-8.660) {};
\node [vertex] (v40) at (-1.122,-7.482) {};
\node [vertex] (v41) at (-2.811,-6.712) {};
\node [vertex] (v42) at (-4.076,-5.169) {};
\node [vertex] (v43) at (-4.285,-4.212) {};
\node [vertex] (v44) at (-3.499,-3.792) {};
\node [vertex] (v45) at (0.985,-0.874) {};
\node [vertex] (v46) at (1.749,-0.020) {};
\node [vertex] (v47) at (6.318,0.900) {};
\node [vertex] (v48) at (7.654,-0.316) {};
\node [vertex] (v49) at (8.660,-5.000) {};
\node [vertex] (v50) at (3.999,-5.759) {};
\node [vertex] (v51) at (-0.556,-5.734) {};
\node [vertex] (v52) at (-3.090,-5.646) {};
\node [vertex] (v53) at (-2.381,-5.057) {};
\node [vertex] (v54) at (2.065,-2.493) {};
\node [vertex] (v55) at (3.392,-0.826) {};
\node [vertex] (v56) at (5.054,-0.517) {};
\node [vertex] (v57) at (6.643,-1.847) {};
\node [vertex] (v58) at (6.821,-3.601) {};
\node [vertex] (v59) at (5.161,-3.956) {};
\node [vertex] (v60) at (1.836,-4.662) {};
\node [vertex] (v61) at (3.373,-1.942) {};
\node [vertex] (v62) at (5.453,-1.624) {};
\node [vertex] (v63) at (4.662,-2.507) {};
\draw (v0)--(v1);
\draw (v0)--(v2);
\draw (v0)--(v3);
\draw (v1)--(v4);
\draw (v1)--(v5);
\draw (v2)--(v6);
\draw (v2)--(v7);
\draw (v3)--(v8);
\draw (v3)--(v9);
\draw (v4)--(v9);
\draw (v4)--(v10);
\draw (v5)--(v11);
\draw (v5)--(v12);
\draw (v6)--(v12);
\draw (v6)--(v13);
\draw (v7)--(v8);
\draw (v7)--(v14);
\draw (v8)--(v15);
\draw (v9)--(v16);
\draw (v10)--(v17);
\draw (v10)--(v18);
\draw (v11)--(v19);
\draw (v11)--(v20);
\draw (v12)--(v21);
\draw (v13)--(v14);
\draw (v13)--(v22);
\draw (v14)--(v23);
\draw (v15)--(v16);
\draw (v15)--(v24);
\draw (v16)--(v17);
\draw (v17)--(v25);
\draw (v18)--(v26);
\draw (v18)--(v27);
\draw (v19)--(v28);
\draw (v19)--(v29);
\draw (v20)--(v30);
\draw (v20)--(v31);
\draw (v21)--(v31);
\draw (v21)--(v32);
\draw (v22)--(v32);
\draw (v22)--(v33);
\draw (v23)--(v33);
\draw (v23)--(v34);
\draw (v24)--(v25);
\draw (v24)--(v35);
\draw (v25)--(v26);
\draw (v26)--(v36);
\draw (v27)--(v37);
\draw (v27)--(v38);
\draw (v28)--(v39);
\draw (v28)--(v40);
\draw (v29)--(v30);
\draw (v29)--(v41);
\draw (v30)--(v42);
\draw (v31)--(v43);
\draw (v32)--(v44);
\draw (v33)--(v45);
\draw (v34)--(v35);
\draw (v34)--(v46);
\draw (v35)--(v36);
\draw (v36)--(v37);
\draw (v37)--(v47);
\draw (v38)--(v48);
\draw (v38)--(v49);
\draw (v39)--(v49);
\draw (v39)--(v50);
\draw (v40)--(v41);
\draw (v40)--(v51);
\draw (v41)--(v52);
\draw (v42)--(v43);
\draw (v42)--(v52);
\draw (v43)--(v44);
\draw (v44)--(v53);
\draw (v45)--(v46);
\draw (v45)--(v54);
\draw (v46)--(v55);
\draw (v47)--(v48);
\draw (v47)--(v56);
\draw (v48)--(v57);
\draw (v49)--(v58);
\draw (v50)--(v59);
\draw (v50)--(v60);
\draw (v51)--(v53);
\draw (v51)--(v60);
\draw (v52)--(v53);
\draw (v54)--(v60);
\draw (v54)--(v61);
\draw (v55)--(v56);
\draw (v55)--(v61);
\draw (v56)--(v62);
\draw (v57)--(v58);
\draw (v57)--(v62);
\draw (v58)--(v59);
\draw (v59)--(v63);
\draw (v61)--(v63);
\draw (v62)--(v63);
\end{tikzpicture}
\hspace{1cm}
\begin{tikzpicture}[scale=0.35]
\tikzstyle{vertex}=[circle, draw=black, thick, fill=black!15!white, inner sep = 0.25mm]
\node [vertex] (v0) at (2.125,0.111) {};
\node [vertex] (v1) at (2.359,0.493) {};
\node [vertex] (v2) at (1.552,0.456) {};
\node [vertex] (v3) at (2.465,-0.617) {};
\node [vertex] (v4) at (2.700,0.509) {};
\node [vertex] (v5) at (2.251,0.860) {};
\node [vertex] (v6) at (1.847,1.056) {};
\node [vertex] (v7) at (0.682,0.200) {};
\node [vertex] (v8) at (2.363,-2.165) {};
\node [vertex] (v9) at (2.907,0.202) {};
\node [vertex] (v10) at (2.834,0.832) {};
\node [vertex] (v11) at (2.546,1.031) {};
\node [vertex] (v12) at (1.740,1.852) {};
\node [vertex] (v13) at (-0.346,2.094) {};
\node [vertex] (v14) at (0.841,-1.949) {};
\node [vertex] (v15) at (3.783,-3.928) {};
\node [vertex] (v16) at (3.555,0.715) {};
\node [vertex] (v17) at (3.256,0.956) {};
\node [vertex] (v18) at (2.555,1.401) {};
\node [vertex] (v19) at (0.818,3.099) {};
\node [vertex] (v20) at (-2.538,2.984) {};
\node [vertex] (v21) at (-0.522,-3.883) {};
\node [vertex] (v22) at (2.737,-6.249) {};
\node [vertex] (v23) at (6.251,-3.369) {};
\node [vertex] (v24) at (4.503,0.987) {};
\node [vertex] (v25) at (3.378,1.320) {};
\node [vertex] (v26) at (1.059,5.350) {};
\node [vertex] (v27) at (-2.866,4.962) {};
\node [vertex] (v28) at (-4.402,1.896) {};
\node [vertex] (v29) at (-2.678,-3.975) {};
\node [vertex] (v30) at (0.272,-5.724) {};
\node [vertex] (v31) at (4.154,-9.096) {};
\node [vertex] (v32) at (8.413,-5.406) {};
\node [vertex] (v33) at (6.556,-0.773) {};
\node [vertex] (v34) at (5.633,0.643) {};
\node [vertex] (v35) at (4.322,1.604) {};
\node [vertex] (v36) at (3.437,6.213) {};
\node [vertex] (v37) at (-1.077,6.737) {};
\node [vertex] (v38) at (-4.983,5.166) {};
\node [vertex] (v39) at (-5.535,2.979) {};
\node [vertex] (v40) at (-5.133,-0.274) {};
\node [vertex] (v41) at (-4.466,-2.544) {};
\node [vertex] (v42) at (-3.046,-5.498) {};
\node [vertex] (v43) at (-1.399,-7.040) {};
\node [vertex] (v44) at (-1.423,-9.898) {};
\node [vertex] (v45) at (10.000,0.000) {};
\node [vertex] (v46) at (7.784,0.408) {};
\node [vertex] (v47) at (5.839,1.714) {};
\node [vertex] (v48) at (5.086,2.504) {};
\node [vertex] (v49) at (5.097,4.194) {};
\node [vertex] (v50) at (4.154,9.096) {};
\node [vertex] (v51) at (-1.423,9.898) {};
\node [vertex] (v52) at (-6.549,7.557) {};
\node [vertex] (v53) at (-7.220,1.874) {};
\node [vertex] (v54) at (-6.530,-0.175) {};
\node [vertex] (v55) at (-5.588,-3.383) {};
\node [vertex] (v56) at (-5.061,-5.480) {};
\node [vertex] (v57) at (-6.549,-7.557) {};
\node [vertex] (v58) at (8.413,5.406) {};
\node [vertex] (v59) at (6.797,1.996) {};
\node [vertex] (v60) at (6.769,3.866) {};
\node [vertex] (v61) at (-9.595,2.817) {};
\node [vertex] (v62) at (-7.238,-2.125) {};
\node [vertex] (v63) at (-9.595,-2.817) {};
\draw (v0)--(v1);
\draw (v0)--(v2);
\draw (v0)--(v3);
\draw (v1)--(v4);
\draw (v1)--(v5);
\draw (v2)--(v6);
\draw (v2)--(v7);
\draw (v3)--(v8);
\draw (v3)--(v9);
\draw (v4)--(v9);
\draw (v4)--(v10);
\draw (v5)--(v6);
\draw (v5)--(v11);
\draw (v6)--(v12);
\draw (v7)--(v13);
\draw (v7)--(v14);
\draw (v8)--(v14);
\draw (v8)--(v15);
\draw (v9)--(v16);
\draw (v10)--(v11);
\draw (v10)--(v17);
\draw (v11)--(v18);
\draw (v12)--(v18);
\draw (v12)--(v19);
\draw (v13)--(v19);
\draw (v13)--(v20);
\draw (v14)--(v21);
\draw (v15)--(v22);
\draw (v15)--(v23);
\draw (v16)--(v17);
\draw (v16)--(v24);
\draw (v17)--(v25);
\draw (v18)--(v25);
\draw (v19)--(v26);
\draw (v20)--(v27);
\draw (v20)--(v28);
\draw (v21)--(v29);
\draw (v21)--(v30);
\draw (v22)--(v30);
\draw (v22)--(v31);
\draw (v23)--(v32);
\draw (v23)--(v33);
\draw (v24)--(v34);
\draw (v24)--(v35);
\draw (v25)--(v35);
\draw (v26)--(v36);
\draw (v26)--(v37);
\draw (v27)--(v37);
\draw (v27)--(v38);
\draw (v28)--(v39);
\draw (v28)--(v40);
\draw (v29)--(v41);
\draw (v29)--(v42);
\draw (v30)--(v43);
\draw (v31)--(v32);
\draw (v31)--(v44);
\draw (v32)--(v45);
\draw (v33)--(v34);
\draw (v33)--(v46);
\draw (v34)--(v47);
\draw (v35)--(v48);
\draw (v36)--(v49);
\draw (v36)--(v50);
\draw (v37)--(v51);
\draw (v38)--(v39);
\draw (v38)--(v52);
\draw (v39)--(v53);
\draw (v40)--(v41);
\draw (v40)--(v54);
\draw (v41)--(v55);
\draw (v42)--(v43);
\draw (v42)--(v56);
\draw (v43)--(v44);
\draw (v44)--(v57);
\draw (v45)--(v46);
\draw (v45)--(v58);
\draw (v46)--(v59);
\draw (v47)--(v48);
\draw (v47)--(v59);
\draw (v48)--(v49);
\draw (v49)--(v60);
\draw (v50)--(v51);
\draw (v50)--(v58);
\draw (v51)--(v52);
\draw (v52)--(v61);
\draw (v53)--(v54);
\draw (v53)--(v61);
\draw (v54)--(v62);
\draw (v55)--(v56);
\draw (v55)--(v62);
\draw (v56)--(v57);
\draw (v57)--(v63);
\draw (v58)--(v60);
\draw (v59)--(v60);
\draw (v61)--(v63);
\draw (v62)--(v63);
\end{tikzpicture}
\end{center}
\caption{Cyclically $5$-edge-connected graphs on $64$ vertices with  $16$ Hamilton cycles}
\label{fig:plan5confew}
\end{figure}

\section{Planar cubic cyclically \texorpdfstring{$5$}{5}-edge-connected graphs with many Hamilton Cycles}
\label{cub5conmany}

Problem 5 from Chia and Thomassen \cite{MR2951794} asks if the generalized Petersen graphs $P(2n,2)$ on $4n$ vertices
have the greatest number of Hamilton cycles amongst all planar cubic cyclically $5$-edge-connected graphs on the same
number of vertices. In fact, in the text of their paper they speculate on something rather stronger, namely that \emph{``For cubic cyclically $5$-edge-connected graphs, the generalized Petersen graphs are possible candidates for those that have the maximum number of longest cycles''}, whereas in Problem 5 the same question is raised only with respect to \emph{planar} graphs.  

For $m \geqslant 9$, the generalized Petersen graph $P(m,2)$ on $2m$ vertices is a cyclically $5$-edge-connected
cubic graph. It is planar if and only if $m$ is even, and in this case the number of Hamilton cycles of $P(m,2)$ is given by
\[
\text{hc}(P(m,2)) = 
\begin{cases}
2\left( F_{\frac{m}{2}+1} + F_{\frac{m}{2}-1} - 1\right),  & m \equiv 0, 2 \pmod{6}; \\
2\left( F_{\frac{m}{2}+1} + F_{\frac{m}{2}-1} - 1\right) + m , & m \equiv 4 \pmod{6},
\end{cases}
\]
where $F_m$ is the $m$-th Fibonacci number (Schwenk \cite{MR1007713}) For example, the dodecahedron is isomorphic to $P(10,2)$, which has $10+2(F_6+F_4 -1) = 30$ Hamilton cycles. 

The computational results for $v \leqslant 64$ summarised in Table~\ref{tab:plancubcyc5con} 
reveal that $P(2n,2)$ is indeed frequently the graph with the most Hamilton cycles 
when the number of vertices is a multiple of $4$, but with a number of curious exceptions on $v=40$, $v=56$ and $v=60$ vertices.  In the remainder of this section we describe an infinite family of graphs, one for each multiple of $20$ vertices, that includes the exceptional graphs on $40$ and $60$ vertices.  On $20$ vertices, this graph coincides with $P(10,2)$, but for all larger multiples of $20$, this graph has strictly more Hamilton cycles than the generalized Petersen graph $P(2n,2)$ on the same number of vertices.

We define a graph, which we denote $N(5,k)$ (for reasons to be explained below) in the following fashion: arrange in a concentric manner $k+2$ disjoint cycles, where the innermost and outermost cycles are copies of $C_5$ and the remaining $k$ cycles are copies of $C_{10}$, as illustrated in the left-hand graph of Figure~\ref{fig:x3circular}. Then connect each consecutive pair of cycles with a $5$-vertex matching in such a way that the vertices on each of the inner $10$-cycles are connected alternately to the neighbouring cycle on the inside, and the neighbouring cycle on the outside, while respecting the cyclic order of the vertices on all the cycles. (A glance at the second illustration of Figure~\ref{fig:x3circular} will clarify this rather cumbersome definition!) 
The graph $N(5,1)$ has two $5$-cycles separated by a single $10$-cycle and is easily recognised as the dodecahedron, while $N(5,3)$ and $N(5,5)$ are the graphs with the greatest number of Hamilton cycles on $40$ and $60$ vertices respectively. 

Statistical physicists and others interested in the chromatic roots of graphs may recognise this graph as the \emph{planar dual} of an $n_F \times 5_P$ strip of the triangular lattice with a vertex of degree $5$ as the end-graph at each end.  In fact, we will count the Hamilton cycles of $N(5,k)$ in a manner highly reminiscent of the ``transfer matrix'' techniques used in those fields.
On the other hand, mathematical chemists may recognise this graph as a \emph{fullerene graph}, which is a cubic planar graph all of whose faces have size $5$ or $6$. This name arose because certain carbon allotropes named \emph{fullerenes} have a carbon skeleton (i.e., the configuration of carbon atoms joined by bonds) that is a fullerene graph. The family $\mathcal{N} = \{N(5,k) \mid k \geqslant 1\}$ corresponds to a family of molecules called \emph{nanotubes}, because their physical realization is a long thin cylindrical structure. So the notation $N(5,k)$ is a mnemonic for ``nanotube of width $5$ and length $k$''. (There is more precise chemical nomenclature as there are a number of different types of nanotube, but we do not need this for our current purpose.)

Although these circular pictures highlight the symmetry and planarity of the graphs $N(5,k)$, in order to analyse their Hamilton cycles we will draw them in a way that exhibits the $5$-edge cuts separating consecutive layers. Figure~\ref{fig:x3linear} shows the graph $N(5,3)$ with the two $5$-cycles at the left- and right-hand end of the picture, and with three ``$10$-vertex layers'' between them. Our counting argument is based on examining which configurations of edges inside a layer can be part of a Hamilton cycle, and in how many ways these configurations can be combined across layers. Figure~\ref{fig:closeup} shows a representation of an arbitrary layer, consisting of five edges $\{e_0, e_1, \ldots, e_4\}$ on the left-hand side of the layer, five edges $\{f_0, f_1, \ldots, f_4\}$ on the right-hand side and the ten vertices $\{v_0, v_1, \ldots, v_4, w_0, w_1, \ldots, w_4\}$ in the layer itself. This configuration exhibits an obvious circular symmetry of order $5$, generated by mapping  all subscripts $i \to i + 1 \pmod{5}$ simultaneously. A Hamilton cycle uses an even number of edges from any edge-cut, and as every edge within a layer connects a vertex in  
$\{v_0,v_1,\ldots,v_4\}$ to a vertex in $\{w_0,w_1,\ldots,w_4\}$, the number of edges used by a Hamilton cycle is the same on each side of the layer. So there are two types of Hamilton cycle --- those using $2$ edges between each consecutive pair of layers, and those using $4$ edges between each consecutive pair of layers. We call these Type 2 and Type 4 respectively.

It turns out that $N(5,k)$ always has Type 2 Hamilton cycles, but it only has Type 4 Hamilton cycles when $k$ is odd, and so it is only when the number of vertices is a multiple of $20$ that $N(5,k)$ has many Hamilton cycles. 

\begin{theorem}
For odd $k$, the graph $N(5,k)$ on $n=10+10k$ vertices has 
\[5 \cdot 2^{k} + 20 \cdot 12^{(k-1)/2}\] 
Hamilton cycles, while for even $k$ it has $5 \cdot 2^k$ Hamilton cycles.
\end{theorem}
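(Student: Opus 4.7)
The plan is to use a transfer-matrix enumeration in the spirit of statistical mechanics. As the discussion just above the theorem spells out, every Hamilton cycle of $N(5,k)$ uses either $2$ or $4$ edges from each of the $k+1$ five-edge cuts, so I will count \emph{Type 2} and \emph{Type 4} cycles separately. For Type 2 I take the state at the $i$-th interface to be the $2$-element subset of used rungs. Restricted to one layer the cycle is $C_{10}$ minus two vertex-disjoint edges, and a quick parity check on the alternating $v$-$w$ colouring of $C_{10}$ forces both resulting arcs to have odd length; otherwise the arcs would pair the two left rungs together and the two right rungs together, pinching off a separate cycle on each side of the interface and violating Hamiltonicity. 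Direct enumeration then gives exactly two valid right-states for every left-state, and the two $C_5$ caps force the end-interface states to be one of the five adjacent pairs. The resulting $5\times 5$ transfer matrix $T_2$ on adjacent-pair states has row and column sums equal to $2$, so $T_2\mathbf{1}=2\mathbf{1}$ and the Type 2 count is $\mathbf{1}^T T_2^k \mathbf{1} = 5\cdot 2^k$ for every $k$.

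For Type 4 I enlarge the state at each interface to include both the $4$-element subset of used rungs and the non-crossing matching recording how the left-side arcs of the cycle pair them (ten states in total). Inside one layer the four missing edges of $C_{10}$ split it into four arcs whose lengths are either $(3,1,1,1)$ (\emph{sub-case A}, a pure pass-through) or $(2,2,1,1)$ (\emph{sub-case B}, with one internal $v$-$v$ arc and one internal $w$-$w$ arc). A careful degree count will show that the unique degree-$2$ pair $(v_a,w_b)$ inside the layer completely determines the configuration, giving $5\times 5=25$ layer configurations. The transfer matrix $T_4$ records the ten-state transitions while rejecting those sub-case B transitions whose internal $v$-$v$ pair coincides with a pair of the incoming left matching; such a coincidence pinches off a premature closed cycle on the left side. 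Each $C_5$ cap contributes precisely the \emph{nested} matching on its four used rungs (the two neighbours of the missing vertex $x$ are joined via the length-$2$ path through $x$, and the other two vertices are joined directly), so the starting and ending vectors $\mathbf{v}_L,\mathbf{v}_R$ are supported on the five nested states.

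I will verify the base case $k=1$ by hand: of the $25$ pairs $(x,x')$ of missing vertices in the two caps, exactly the $20$ whose sub-case B $v$-$v$ pair avoids both pairs of the left matching survive the premature-loop test and yield Hamilton cycles, matching $20\cdot 12^0$. For general $k$ a parity argument will show that each allowed layer swaps the ``nested'' and ``adjacent'' classes of non-crossing matchings, so Type 4 cycles exist only when $k$ is odd (an odd number of swaps is required so that the last-interface state combines with the right cap's nested matching into a single $4$-cycle, rather than two degenerate $2$-cycles). Using the $\mathbb Z_5$-rotational symmetry of the picture to block-diagonalise $T_4^2$ on its nested invariant subspace will isolate the eigenvalue $12$, and together with the base case this delivers the formula $20\cdot 12^{(k-1)/2}$. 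The main obstacle will be setting up $T_4$ cleanly, in particular tracking how a sub-case B layer composes with the incoming left matching and properly excluding the premature-loop transitions; by contrast, once the pass-through property is noticed the Type 2 count is essentially immediate.
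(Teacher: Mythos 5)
Your overall strategy --- layer the graph, split the count into Type 2 and Type 4 according to the size of each $5$-edge cut, and run a transfer-matrix recursion whose states are the non-crossing pairings of the interface terminals --- is exactly the strategy of the paper, and your Type 2 argument is complete and correct. For Type 4, however, two points need attention. First, your ending vector is supported on the wrong orbit. The right-hand cap's \emph{own} pairing of its four used rungs is indeed the nested one $\{x-1,x+1 \mid x+2,x+3 \mid x\}$, but a partial Hamilton cycle $H(0,k)$ is completed by that cap precisely when its terminal partition, unioned with this nested pairing, forms a single $4$-cycle --- i.e.\ precisely when the terminal partition \emph{differs} from the nested pairing, which among non-crossing partitions means it lies in the ``adjacent'' orbit of $\{01\mid 2\mid 34\}$. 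So $\mathbf{v}_R$ must be supported on the five adjacent states, not the five nested ones; as literally written, $\mathbf{v}_L^T T_4^k \mathbf{v}_R$ would vanish for odd $k$ and be nonzero for even $k$, which is exactly backwards. Your later parity sentence shows you understand the correct completion condition, but the stated support of $\mathbf{v}_R$ contradicts it and has to be fixed before the matrix formula means anything.

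Second, the quantitative heart of the theorem --- that exactly $4$ internal tiles are admissible out of a nested state, exactly $3$ out of an adjacent state, and that \emph{every} admissible tile swaps the two orbits --- is deferred to ``will show'' and ``will isolate the eigenvalue $12$.'' That enumeration is precisely the content of the paper's proof: it exhibits the four tiles compatible with $\{04\mid 13\mid 2\}$ and the three compatible with $\{01\mid 2\mid 34\}$, checks that all images land in the opposite orbit, and reads off $5\cdot 4\cdot 3\cdot 4\cdots = 20\cdot 12^{(k-1)/2}$. I do not see a soft ``parity argument'' for the orbit-swapping property; the paper settles it by inspecting all tiles, and your $25$-configuration parametrisation of a layer by its degree-$2$ pair $(v_a,w_b)$ is a clean way to organise that inspection, but it still has to be carried out. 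The base case $k=1$ (the dodecahedron, $20$ Type 4 cycles) is consistent with the formula but does not by itself determine the two matrix entries whose product is $12$, so without the explicit tile count the exponent base is unproved.
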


\begin{proof}
For brevity, let $N$ denote the graph $N(5,k)$ and assume that it is laid out as in Figure~\ref{fig:x3linear} with $L_0$ denoting the left-hand $5$-cycle (here we are using $L$ for ``layer''), $L_1$, $L_2$, $\ldots$, $L_k$ denoting the $10$-cycles in order from left-to-right, and $L_{k+1}$ denoting the right-hand $5$-cycle.  We call $L_0$ the \emph{starting layer}, $L_i$ for $i \leqslant i \leqslant k$ the \emph{internal layers} and $L_{k+1}$ the \emph{finishing layer}. Given a Hamilton cycle $H$, let $H(i)$ denote the restriction of $H$ to $L_i$, and let $H(0,i)$ denote the restriction of $H$ to the vertices $L_0 \cup L_1 \cup \cdots \cup L_i$. We say that $H(0,i)$ is a \emph{partial Hamilton cycle with $i+1$ layers}. Our overall strategy will be to count  how many ways there are to extend a partial Hamilton cycle with $i+1$ layers to a partial Hamilton cycle with $i+2$ layers.
As $H$ is a Hamilton cycle, $H(0,i)$ is a disjoint union of paths that collectively touch every vertex in $L_0 \cup L_1 \cup \ldots \cup L_i$, and whose vertices of degree $1$, which we call \emph{terminals}, are all on the right-hand side of $L_i$. Illustrating this, 
\cref{fig:partialhamcyc} shows a partial Hamilton cycle $H(0,2)$ in $N(5,3)$ consisting of two paths, and which has terminals $\{w_0, w_1, w_2, w_3\}$. The fifth vertex $w_4$ is not a terminal because it already has degree $2$ in the partial Hamilton cycle. Each path in the partial Hamilton cycle has two endvertices, and so the terminals of $H(0,i)$ are grouped into pairs. Therefore the partial Hamilton cycle determines a \emph{partition} of  $\{w_0, w_1, \ldots, w_4\}$ into pairs, and singletons (the vertices of degree $2$), which we will call the \emph{terminal partition} of $H(0,i)$. 
As an example, the  partial Hamilton cycle of \cref{fig:partialhamcyc} has terminal partition $\{w_0, w_3 \mid w_1, w_2 \mid w_4\}$ which for brevity we will denote $\{03|12|4\}$.

The key observation behind our counting technique is that it is \emph{the terminal partition alone} that determines in how many ways a partial Hamilton cycle can be extended by another layer and, in addition, the terminal partitions of the resulting extended partial Hamilton cycles. Thus our strategy will be to work layer-by-layer, keeping track at each stage not only of the total number of partial Hamilton cycles, but also the auxiliary information of how many partial Hamilton cycles have each type of terminal partition. By the observation above, this additional information is sufficient to calculate both the number of partial Hamilton cycles \emph{and} the additional information for the partial Hamilton cycles with an additional layer.    

The restriction $H(i)$ of a Hamilton cycle $H$ to any individual internal layer $L_i$ has the following structure: it is a spanning subgraph of $L_i$ where every vertex has degree $1$ or $2$, and which has an even number of terminals in $\{v_0, v_1, \ldots, v_4\}$ and (as a consequence) the same even number of terminals in  $\{w_0, w_1, \ldots, w_4\}$.  Similarly, $H(0)$ and $H(k+1)$ are spanning subgraphs of $L_0$ and $L_{k+1}$ respectively,  with every vertex having degree $1$ or $2$, and again having an even number of vertices of degree $1$ that we call terminals. 
It is clear that the sequence $(H(0), H(1), H(2), \ldots, H(k+1))$ determines the entire Hamilton cycle because every terminal vertex is incident with a unique edge connecting consecutive layers. 

We will call a spanning subgraph of $C_{10}$ (labelled as in \cref{fig:closeup}) that meets these conditions an \emph{internal tile} and a spanning subgraph of $C_5$ meeting these conditions an \emph{end tile}. So each Hamilton cycle can be identified with a sequence of tiles, starting and ending with an end-tile, and otherwise consisting of internal tiles. In order that a sequence of tiles correspond to a Hamilton cycle it is necessary that the terminals on one tile match up to the terminals on adjacent tiles, and also that the resulting $2$-regular graph has just one component. We say that a tile $T$ is \emph{consistent} with a partial Hamilton cycle $H(0,i)$ provided that adding $T$ to the sequence of tiles yields a valid partial Hamilton cycle.

Now we are ready to count the Type 2 Hamilton cycles. A routine case analysis shows that every end tile is equivalent to the one shown in Figure~\ref{fig:type2tiles} and every internal tile is equivalent to one of the two shown in Figure~\ref{fig:type2tiles}. Thus up to symmetry we may assume that $H(0)$ is  the tile pictured in Figure~\ref{fig:type2tiles}. The only valid choices for the next tile are those that have $\{v_2, v_3\}$ as left-hand terminals, so there are two choices for $H(1)$ (also shown in Figure~\ref{fig:type2tiles}).  Now although the exact choices for $H(2)$ depend on which of the two choices for $H(1)$ was used, the \emph{number of possibilities} does not. All of the tiles have two consecutive (in the cyclic order) vertices for left-hand terminals and two consecutive vertices for their right-hand terminals, and so there are always exactly two tiles consistent with any partial Hamilton cycle.  So in total, there are $5$ choices for the $5$-tile $H(0)$, then $2$ choices for each of $H(1)$, $H(2)$, $\ldots$, $H(k)$ and  finally there is a unique choice of final tile to complete the Hamilton cycle. Therefore for all $k$ there are exactly $5 \times 2^k$ Hamilton cycles of Type 2.

For the Hamilton cycles of Type 2, there was only one requirement when extending the tuple of tiles, namely that the right- and left-hand terminals of consecutive tiles must line up. Therefore the choices for each tile depended purely on the (right-hand) terminals of the previous one. However the analysis for Type 4 is slightly more subtle, because whether a tile is consistent or otherwise depends not just on the set of all terminals, but also on the \emph{terminal partition} of $H(0,i)$. If the new edges create a path in $H(0,i+1)$ joining two terminals of $H(0,i)$ in the same cell, then  $H(0,i+1)$ will contain a short cycle, so never lead to a Hamilton cycle. If the new tile is consistent with $H(0,i)$, then by adding it we obtain a partial Hamilton cycle $H(0,i+1)$ and this newly extended partial Hamilton cycle will have its own terminal partition. For example, \cref{fig:extending} shows a possible partial Hamilton cycle $H(0,1)$ with terminal partition $\{04|12|3\}$ and a possible extension $H(0,2)$ which has terminal partition $\{03|12|4\}$ (the terminal partition is illustrated by double-ended arrows showing the ends of each $2$-element cell). In this situation we say that the tile $H(2)$ \emph{transfers} the terminal partition $\{04|12|3\}$ to the terminal partition $\{03|12|4\}$.

As $N(5,k)$ is planar, the only terminal partitions that can arise are \emph{non-crossing} partitions, and there are ten non-crossing partitions with one singleton and two pairs. These ten partitions fall into two orbits under the cyclic automorphism of order $5$, namely $\{01 | 2 | 34\}$ and its five rotations, and $\{04 | 13 | 2\}$ and its five rotations, which we will denote as $\mathcal{O}_1$ and $\mathcal{O}_2$ respectively. 
Simple case analysis shows that up to rotation, there is a unique possible starting tile (i.e. choice for $H(0)$)  as illustrated in the first subfigure of \cref{fig:compat0}. The corresponding partial Hamilton cycle has terminal partition $\{04|13|2\}$ which lies in $\mathcal{O}_2$.
There are exactly four tiles compatible with this terminal partition, as illustrated in the second subfigure of \cref{fig:compat0} and the three subfigures of \cref{fig:compat1}.  These four tiles transfer the terminal partition $\{04|13|2\}$ to $\{01|2|34\}$, $\{01|2|34\}$, $\{04|1|23\}$ and $\{01|23|4\}$ respectively, all of which lie in the orbit $\mathcal{O}_1$.  Similarly, given a terminal partition from  $\mathcal{O}_1$, say $\{01|2|34\}$, there are three tiles compatible with this, as illustrated in \cref{fig:compat2}. These three tiles transfer the terminal partition $\{01|2|34\}$ to the terminal partitions
$\{04 | 13 | 2\}$, $\{0|14|23\}$ and $\{01|24|3\}$ respectively, which all belong to $\mathcal{O}_2$.

Now we can count the number of partial Hamilton cycles $H(0,i)$ for any $i$. There are exactly $5$ choices for $H(0)$, each of which have a terminal partition from $\mathcal{O}_2$. Each of these can be extended in $4$ ways, giving $5 \times 4 = 20$ choices for $H(0,1)$, each of which has a terminal partition from $\mathcal{O}_1$. Each of these can be extended in $3$ ways, given $5 \times 4 \times 3 = 60$ choices for $H(0,2)$, each of which has a terminal partition from $\mathcal{O}_2$.   Therefore there are $4 \times 3 = 12$ ways to extend a given partial Hamilton cycle $H(0,i)$ by two additional layers to a partial Hamilton cycle $H(0,i+2)$.

Finally we must consider how a partial Hamilton cycle $H(0,k)$ can be completed to a Hamilton cycle, by adding the final tile $H(k+1)$. Again, a simple case analysis on the five-cycle shows that this is only possible if the terminal partition of $H(0,k)$ lies in $\mathcal{O}_1$, and in this case there is a unique way to complete the partial Hamilton cycle.   In particular if $k$ is even, then the terminal partition of $H(0,k)$ lies in $\mathcal{O}_2$ and so it cannot be completed, meaning that these graphs have no Type 4 Hamilton cycles.  If $k$ is odd however, then there are $5$ choices for $H(0)$, $4$ choices for $H(1)$, and then $12$ choices for the remaining $(k-1)/2$ pairs of layers, giving a total of $20 \cdot 12^{(k-1)/2}$ Hamilton cycles of Type 4.

Adding the contributions from each type of Hamilton cycle yields the stated result.  \end{proof}

The nanotubes $N(5,k)$ have many Hamilton cycles only when $k$ is odd, so the number of vertices is a multiple of $20$, say $v = 20t$. Then we are comparing the generalized Petersen graph $P(10 t, 2)$ and the nanotube $N(5, 2 t -1)$. Evaluating the two expressions for the numbers of Hamilton cycles in each graph and extracting the leading term gives the following values:
\[
\text{hc}(P(10t,2)) \approx 2 (\varphi^5)^t \quad \text{ and } \quad \text{hc}(N(5,2t-1)) \approx \left(\frac{5}{3}\right) 12^t
\]
where $\varphi = (1+\sqrt{5})/2$ is the Golden Ratio.

As $\varphi^5 \approx 11.0902 < 12$ it follows that asymptotically the nanotubes have more Hamilton cycles than the equal-sized generalized Petersen graph, thereby answering Chia and Thomassen's question in the affirmative. Checking the exact values for small $t$ confirms that when $t=2$, the nanotube on $40$ vertices already has strictly more Hamilton cycles than $P(20,2)$ and this holds for all larger values of $t$.


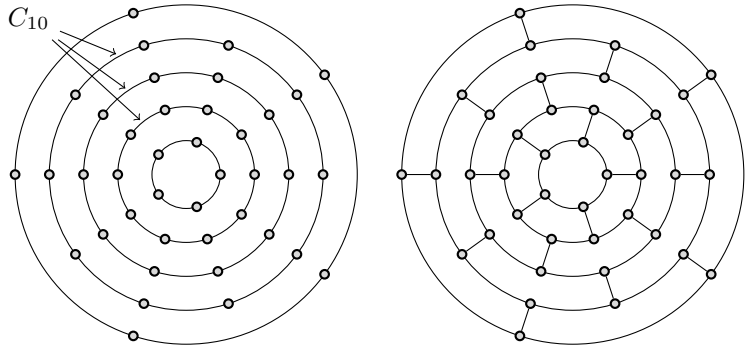
\begin{figure}
\begin{center}
\begin{tikzpicture}[scale=0.9]
\tikzstyle{vertex}=[circle, draw=black, thick, fill=black!15!white, inner sep = 0.4mm]
\draw (0,0) circle (0.5cm);
\draw (0,0) circle (1cm);
\draw (0,0) circle (1.5cm);
\draw (0,0) circle (2cm);
\draw (0,0) circle (2.5cm);
\foreach \x in {0,1,...,4} {
\node [vertex] (v\x) at (\x*360/5:0.5cm) {};
\node [vertex] (w\x) at (36+\x*360/5:2.5cm) {};
 }
\foreach \x in {0,1,...,9} {
\node[vertex] (a\x) at (\x*360/10:1cm) {};
\node[vertex] (b\x) at (\x*360/10:1.5cm) {};
\node[vertex] (c\x) at (\x*360/10:2cm) {};
}

\node (c10) at (-2.3,2.3) {\small $C_{10}$};
\draw [ ->] (c10) -- (120:2.05cm) {};
\draw [->] (c10) -- (125:1.55cm) {};
\draw [->] (c10) -- (130:1.05cm) {};
%

\end{tikzpicture}
\hspace{0.25cm}
\begin{tikzpicture}[scale=0.9]
\tikzstyle{vertex}=[circle, draw=black, thick, fill=black!15!white, inner sep = 0.4mm]
\draw (0,0) circle (0.5cm);
\draw (0,0) circle (1cm);
\draw (0,0) circle (1.5cm);
\draw (0,0) circle (2cm);
\draw (0,0) circle (2.5cm);
\foreach \x in {0,1,...,4} {
\node [vertex] (v\x) at (\x*360/5:0.5cm) {};
\node [vertex] (w\x) at (36+\x*360/5:2.5cm) {};
 }
\foreach \x in {0,1,...,9} {
\node[vertex] (a\x) at (\x*360/10:1cm) {};
\node[vertex] (b\x) at (\x*360/10:1.5cm) {};
\node[vertex] (c\x) at (\x*360/10:2cm) {};
}
\foreach \x/\y in {0/0,1/2,2/4,3/6,4/8} {
\draw (v\x) -- (a\y);
\draw (b\y) -- (c\y);
}

\foreach \x/\y in {0/1,1/3,2/5,3/7,4/9} {
\draw (a\y)--(b\y);
\draw (c\y)--(w\x);
}

\end{tikzpicture}
\end{center}
\caption{Constructing the layered graph $N(5,3)$}
\label{fig:x3circular}
\end{figure}

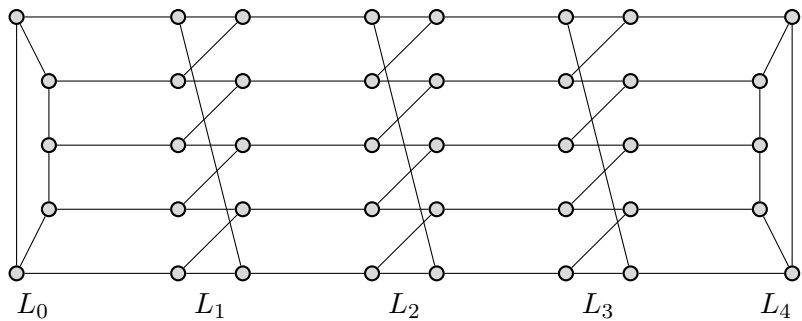
\begin{figure}
\begin{center}
\begin{tikzpicture}[scale=0.85]
\tikzstyle{vertex}=[circle, draw=black,  thick, fill=black!15!white, inner sep = 0.65mm]

\tikzstyle{edge0}=[black]
\tikzstyle{edge1}=[lightgray]
\tikzstyle{edge2}=[ultra thick, red]

\foreach \x in {0,1,...,4} {

\node [vertex] (b\x) at (2,\x) {};
\node [vertex] (c\x) at (3,\x) {};
\node [vertex] (d\x) at (5,\x) {};
\node [vertex] (e\x) at (6,\x) {};
\node [vertex] (f\x) at (8,\x) {};
\node [vertex] (g\x) at (9,\x) {};
}
\foreach \x in {1,2,3} {
\node [vertex] (a\x) at (0,\x) {};
\node [vertex] (h\x) at (11,\x) {};
}
\node [vertex] (a0) at (-0.5,0) {};
\node [vertex] (a4) at (-0.5,4) {};

\node [vertex] (h0) at (11.5,0) {};
\node [vertex] (h4) at (11.5,4) {};

\draw [edge0] (a0)--(a1)--(a2)--(a3)--(a4)--(a0);
\draw [edge0] (h0)--(h1)--(h2)--(h3)--(h4)--(h0);


\foreach \x in {0,1,...,4} {
\draw [edge0]  (a\x)--(b\x);
\draw [edge0]  (b\x)--(c\x);
\draw [edge0]  (c\x)--(d\x);
\draw [edge0]  (d\x)--(e\x);
\draw [edge0]  (e\x)--(f\x);
\draw [edge0]  (f\x)--(g\x);
\draw [edge0]  (g\x)--(h\x);
}

\foreach \x/\y in {0/1, 1/2, 2/3, 3/4, 4/0} {
\draw [edge0]  (b\x)--(c\y);
\draw [edge0]  (d\x)--(e\y);
\draw [edge0]  (f\x)--(g\y);
}

\node at (-0.25,-0.5) {$L_0$};
\node at (2.5,-0.5) {$L_1$};
\node at (5.5,-0.5) {$L_2$};
\node at (8.5,-0.5) {$L_3$};
\node at (11.25,-0.5) {$L_4$};

\end{tikzpicture}
\end{center}
\caption{The graph $N(5,3)$ redrawn}
\label{fig:x3linear}
\end{figure}

\begin{figure}
\begin{center}
\begin{tikzpicture}[yscale=1]
\tikzstyle{vertex}=[circle, draw=black, thick, inner sep = 0.1mm, minimum size = 5mm]
\foreach \x in {0,1,...,4} {
\node (e\x) at (0.25,\x) {\small $e_\x$};
\node [vertex] (v\x) at (2,\x) {\small $v_\x$};
\node [vertex] (w\x) at (4,\x) {\small $w_\x$};
\node (f\x) at (5.75,\x) {$f_\x$};
\draw [thick] (e\x)--(v\x)--(w\x)--(f\x);
}
\draw [thick] (v0)--(w1);
\draw  [thick](v1)--(w2);
\draw  [thick] (v2)--(w3);
\draw  [thick] (v3)--(w4);
\draw  [thick] (v4)--(w0);
\end{tikzpicture}
\end{center}
\caption{One of the $10$-vertex layers}
\label{fig:closeup}
\end{figure}
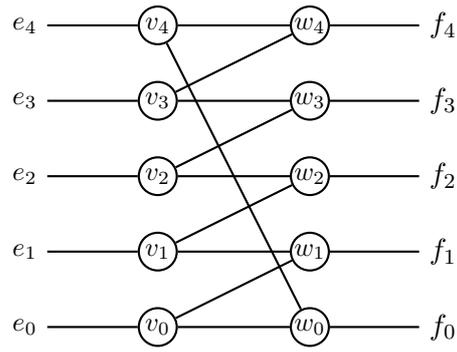

\begin{figure}
\begin{center}
\begin{tikzpicture}[scale=0.85]
\tikzstyle{vertex}=[circle, draw=black, thick, fill=black!15!white, inner sep = 0.65mm]
\foreach \x in {0,1,...,4} {

\node [vertex] (b\x) at (2,\x) {};
\node [vertex] (c\x) at (3,\x) {};
\node [vertex] (d\x) at (5,\x) {};
\node [vertex] (e\x) at (6,\x) {};
\node [vertex] (f\x) at (8,\x) {};
\node [vertex] (g\x) at (9,\x) {};
}
\foreach \x in {1,2,3} {
\node [vertex] (a\x) at (0,\x) {};
\node [vertex] (h\x) at (11,\x) {};
}
\node [vertex] (a0) at (-0.5,0) {};
\node [vertex] (a4) at (-0.5,4) {};

\node [vertex] (h0) at (11.5,0) {};
\node [vertex] (h4) at (11.5,4) {};

\draw [thick, dotted] (a0)--(a1)--(a2)--(a3)--(a4)--(a0);
\draw [thick, dotted] (h0)--(h1)--(h2)--(h3)--(h4)--(h0);


\foreach \x in {0,1,...,4} {
\draw [thick, dotted] (a\x)--(b\x);
\draw [thick, dotted] (b\x)--(c\x);
\draw [thick, dotted] (c\x)--(d\x);
\draw [thick, dotted] (d\x)--(e\x);
\draw [thick, dotted] (e\x)--(f\x);
\draw [thick, dotted] (f\x)--(g\x);
\draw [thick, dotted] (g\x)--(h\x);
}

\foreach \x/\y in {0/1, 1/2, 2/3, 3/4, 4/0} {
\draw [thick, dotted] (b\x)--(c\y);
\draw [thick, dotted] (d\x)--(e\y);
\draw [thick, dotted] (f\x)--(g\y);
}

\draw[ultra thick] (a0)--(a1);
\draw[ultra thick] (a1)--(a2);
\draw[ultra thick] (a2)--(a3);
\draw[ultra thick] (a3)--(a4);
\draw[ultra thick] (a4)--(b4);
\draw[ultra thick] (b4)--(c0);
\draw[ultra thick] (c0)--(d0);
\draw[ultra thick] (d0)--(e0);
\draw[ultra thick] (e0)--(f0);
\draw[ultra thick] (f0)--(g0);
\draw[ultra thick] (g0)--(f4);
\draw[ultra thick] (f4)--(g4);
\draw[ultra thick] (g4)--(f3);
\draw[ultra thick] (f3)--(g3);
\draw[ultra thick] (g3)--(f2);
\draw[ultra thick] (f2)--(g2);
\draw[ultra thick] (g2)--(h2);
\draw[ultra thick] (h2)--(h3);
\draw[ultra thick] (h3)--(h4);
\draw[ultra thick] (h4)--(h0);
\draw[ultra thick] (h0)--(h1);
\draw[ultra thick] (h1)--(g1);
\draw[ultra thick] (g1)--(f1);
\draw[ultra thick] (f1)--(e1);
\draw[ultra thick] (e1)--(d1);
\draw[ultra thick] (d1)--(e2);
\draw[ultra thick] (e2)--(d2);
\draw[ultra thick] (d2)--(e3);
\draw[ultra thick] (e3)--(d3);
\draw[ultra thick] (d3)--(e4);
\draw[ultra thick] (e4)--(d4);
\draw[ultra thick] (d4)--(c4);
\draw[ultra thick] (c4)--(b3);
\draw[ultra thick] (b3)--(c3);
\draw[ultra thick] (c3)--(b2);
\draw[ultra thick] (b2)--(c2);
\draw[ultra thick] (c2)--(b1);
\draw[ultra thick] (b1)--(c1);
\draw[ultra thick] (c1)--(b0);
\draw[ultra thick] (b0)--(a0);

\end{tikzpicture}
\end{center}
\caption{A Type 2 Hamilton cycle in the graph $N(5,3)$}
\label{fig:type2}
\end{figure}

\begin{figure}
\begin{center}
\begin{tikzpicture}[scale=0.85]
\tikzstyle{vertex}=[circle, draw=black, thick, fill=black!15!white, inner sep = 0.65mm]
\tikzstyle{nonhamedge}=[thick, dotted]
\tikzstyle{hamedge}=[ultra thick]
\foreach \x in {0,1,...,4} {

\node [vertex] (b\x) at (2,\x) {};
\node [vertex] (c\x) at (3,\x) {};
\node [vertex] (d\x) at (5,\x) {};
\node [vertex] (e\x) at (6,\x) {};
\node [vertex] (f\x) at (8,\x) {};
\node [vertex] (g\x) at (9,\x) {};
}
\foreach \x in {1,2,3} {
\node [vertex] (a\x) at (0,\x) {};
\node [vertex] (h\x) at (11,\x) {};
}
\node [vertex] (a0) at (-0.5,0) {};
\node [vertex] (a4) at (-0.5,4) {};

\node [vertex] (h0) at (11.5,0) {};
\node [vertex] (h4) at (11.5,4) {};

\draw [nonhamedge] (a0)--(a1)--(a2)--(a3)--(a4)--(a0);
\draw [nonhamedge] (h0)--(h1)--(h2)--(h3)--(h4)--(h0);


\foreach \x in {0,1,...,4} {
\draw [nonhamedge] (a\x)--(b\x);
\draw  [nonhamedge](b\x)--(c\x);
\draw  [nonhamedge](c\x)--(d\x);
\draw  [nonhamedge](d\x)--(e\x);
\draw  [nonhamedge](e\x)--(f\x);
\draw  [nonhamedge](f\x)--(g\x);
\draw  [nonhamedge] (g\x)--(h\x);
}

\foreach \x/\y in {0/1, 1/2, 2/3, 3/4, 4/0} {
\draw [nonhamedge] (b\x)--(c\y);
\draw [nonhamedge] (d\x)--(e\y);
\draw [nonhamedge] (f\x)--(g\y);
}

\fill [white,opacity=0] (7,-0.5) rectangle (12,5);

\draw[hamedge] (a0)--(a4);
\draw[hamedge] (a4)--(b4);
\draw[hamedge] (b4)--(c4);
\draw[hamedge] (c4)--(d4);
\draw[hamedge] (d4)--(e4);
\draw[hamedge] (e4)--(d3);
\draw[hamedge] (d3)--(e3);
\draw[hamedge] (e3)--(f3);
\draw[hamedge] (f3)--(g4);
\draw[hamedge] (g4)--(f4);
\draw[hamedge] (f4)--(g0);
\draw[hamedge] (g0)--(h0);
\draw[hamedge] (h0)--(h4);
\draw[hamedge] (h4)--(h3);
\draw[hamedge] (h3)--(g3);
\draw[hamedge] (g3)--(f2);
\draw[hamedge] (f2)--(e2);
\draw[hamedge] (e2)--(d2);
\draw[hamedge] (d2)--(c2);
\draw[hamedge] (c2)--(b2);
\draw[hamedge] (b2)--(c3);
\draw[hamedge] (c3)--(b3);
\draw[hamedge] (b3)--(a3);
\draw[hamedge] (a3)--(a2);
\draw[hamedge] (a2)--(a1);
\draw[hamedge] (a1)--(b1);
\draw[hamedge] (b1)--(c1);
\draw[hamedge] (c1)--(d1);
\draw[hamedge] (d1)--(e1);
\draw[hamedge] (e1)--(f1);
\draw[hamedge] (f1)--(g2);
\draw[hamedge] (g2)--(h2);
\draw[hamedge] (h2)--(h1);
\draw[hamedge] (h1)--(g1);
\draw[hamedge] (g1)--(f0);
\draw[hamedge] (f0)--(e0);
\draw[hamedge] (e0)--(d0);
\draw[hamedge] (d0)--(c0);
\draw[hamedge] (c0)--(b0);
\draw[hamedge] (b0)--(a0);

\end{tikzpicture}
\end{center}
\caption{A Type 4 Hamilton cycle in the graph $N(5,3)$}
\label{fig:type4}
\end{figure}

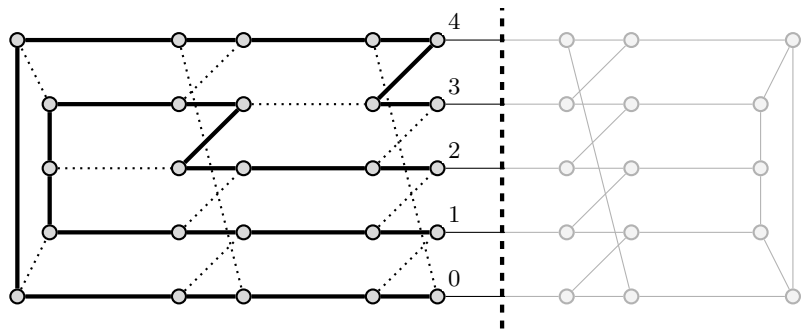
\begin{figure}
\begin{center}
\begin{tikzpicture}[scale=0.85]
\tikzstyle{vertex}=[circle, draw=black, thick, fill=black!15!white, inner sep = 0.65mm]
\tikzstyle{nonhamedge}=[thick, dotted]
\tikzstyle{hamedge}=[ultra thick]
\foreach \x in {0,1,...,4} {

\node [vertex] (b\x) at (2,\x) {};
\node [vertex] (c\x) at (3,\x) {};
\node [vertex] (d\x) at (5,\x) {};
\node [vertex] (e\x) at (6,\x) {};
\node [vertex] (f\x) at (8,\x) {};
\node [vertex] (g\x) at (9,\x) {};
}
\foreach \x in {1,2,3} {
\node [vertex] (a\x) at (0,\x) {};
\node [vertex] (h\x) at (11,\x) {};
}
\node [vertex] (a0) at (-0.5,0) {};
\node [vertex] (a4) at (-0.5,4) {};

\node [vertex] (h0) at (11.5,0) {};
\node [vertex] (h4) at (11.5,4) {};

\draw [nonhamedge] (a0)--(a1)--(a2)--(a3)--(a4)--(a0);
\draw (h0)--(h1)--(h2)--(h3)--(h4)--(h0);


\foreach \x in {0,1,...,4} {
\draw [nonhamedge] (a\x)--(b\x);
\draw [nonhamedge] (b\x)--(c\x);
\draw [nonhamedge] (c\x)--(d\x);
\draw [nonhamedge] (d\x)--(e\x);
\draw (e\x)--(f\x);
\draw (f\x)--(g\x);
\draw (g\x)--(h\x);
}

\foreach \x/\y in {0/1, 1/2, 2/3, 3/4, 4/0} {
\draw [nonhamedge] (b\x)--(c\y);
\draw [nonhamedge] (d\x)--(e\y);
\draw (f\x)--(g\y);
}

\draw[hamedge] (a0)--(a4);
\draw[hamedge] (a4)--(b4);
\draw[hamedge] (b4)--(c4);
\draw[hamedge] (c4)--(d4);
\draw[hamedge] (d4)--(e4);
\draw[hamedge] (e4)--(d3);
\draw[hamedge] (d3)--(e3);
\draw[hamedge] (e2)--(d2);
\draw[hamedge] (d2)--(c2);
\draw[hamedge] (c2)--(b2);
\draw[hamedge] (b2)--(c3);
\draw[hamedge] (c3)--(b3);
\draw[hamedge] (b3)--(a3);
\draw[hamedge] (a3)--(a2);
\draw[hamedge] (a2)--(a1);
\draw[hamedge] (a1)--(b1);
\draw[hamedge] (b1)--(c1);
\draw[hamedge] (c1)--(d1);
\draw[hamedge] (d1)--(e1);
\draw[hamedge] (e0)--(d0);
\draw[hamedge] (d0)--(c0);
\draw[hamedge] (c0)--(b0);
\draw[hamedge] (b0)--(a0);

\draw [ultra thick, dashed] (7,-0.5)--(7,4.5);

\fill [white,opacity=0.7] (7.05,-0.5) rectangle (12,5);


\node [above right, fill=white, outer sep = 2pt, inner sep = 2pt] at (6,0) {\footnotesize $0$};
\node [above right, fill=white, outer sep = 2pt, inner sep = 2pt] at (6,1) {\footnotesize $1$};
\node [above right, fill=white, outer sep = 2pt, inner sep = 2pt] at (6,2) {\footnotesize $2$};
\node [above right, fill=white, outer sep = 2pt, inner sep = 2pt] at (6,3) {\footnotesize $3$};
\node [above right, fill=white, outer sep = 2pt, inner sep = 2pt] at (6,4) {\footnotesize $4$};

\end{tikzpicture}
\end{center}
\caption{The partial Hamilton cycle $H(0,2)$ has terminal partition $\{03|12|4\}$}
\label{fig:partialhamcyc}
\end{figure}

\section{Wider nanotubes}

It is easy to see how the definition of the nanotubes $N(5,k)$ can be generalized to the family $N(w,k)$ for any fixed $w > 5$ (here $w$ stands for ``width'') --- replace the $5$-cycles with $w$-cycles and the $10$-cycles with $2w$-cycles, and connect them using the same alternating in-out pattern as for the family $N(5,k)$. Although these graphs are not even fullerenes for $w > 6$ we will still call them nanotubes. In this section we will outline how the method above can be generalized to enumerate Hamilton cycles in the family $N(w,k)$ for any moderate fixed with $k$. The resulting expression involves a matrix product, and while the exact value for the number of Hamilton cycles can easily be evaluated for specific values of $k$, there may be no exact formula. In this latter case however, it can be possible to extract the asymptotic rate of growth using the eigenstructure of the transfer matrix.

As before, any Hamilton cycle in the graph $N(w,k)$ will use a fixed even number of the edges between consecutive layers, and we separately count the number of Hamilton cycles of Type $2$, Type $4$ and so on. Suppose then that we are trying to count the number of Hamilton cycles of Type $2c$ for some $c$ such that $2c \leqslant w$. 

Let $\mathcal{P}$ denote the set of non-crossing partitions of $\{0, 1, \ldots, w-1\}$ with exactly $c$ cells of size $2$ and $w-2c$ singletons, and suppose that  
\[
\mathcal{P} = \mathcal{O}_1 \cup \mathcal{O}_2 \cup \cdots \cup \mathcal{O}_\ell
\]
is the partition of $\mathcal{P}$ into the orbits of the group generated by the rotation  $i \mapsto i+1 \pmod{w}$.  

Now form an $\ell \times \ell$ matrix $M$, which we will call the \emph{transfer matrix}, with rows and columns indexed by $\{1, 2, \ldots, \ell\}$. For each $i$ such that $1 \leqslant i \leqslant \ell$, pick a particular terminal partition $\pi$ from $\mathcal{O}_i$ and let the $ij$-entry of $M$ be the \emph{number of tiles} that transfers $\pi$ to a terminal partition in the orbit $\mathcal{O}_j$. By the symmetry of the situation, $M_{ij}$ is independent of the actual choice of $\pi$.

In \cref{cub5conmany} we implicitly calculated the entries of the transfer matrix, but due to their particularly simple form, we did not need to explicitly put them into matrix form. Had we done so, we would have obtained the very simple transfer matrix 
\[
M = \left[
\begin{array}{cc}
0&3\\
4&0
\end{array}
\right].
\]

\newcommand{\vs}{v_\mathrm{s}}
\newcommand{\vf}{v_\mathrm{f}}

Now define two vectors, $\vs$ and $\vf$ (for ``start'' and ``finish'') indexed by $\{1,2,\ldots,\ell\}$, where the $i$-th  entry of $\vs$ is the number of starting tiles (choices for $H(0)$) with terminal partition in $\mathcal{O}_i$, while the $i$-th entry of $\vf$ is the number of ways in which a partial Hamilton cycle with terminal partition in $\mathcal{O}_i$ can be \emph{completed} to a Hamilton cycle by the addition of a finishing tile.

For the $w=5$ and $2c=4$ discussed in \cref{cub5conmany} we have
\[
\vs = 
\left[
\begin{array}{c}
0\\
5
\end{array}
\right]
\qquad
\vf = 
\left[
\begin{array}{c}
1\\
0
\end{array}
\right].
\]

\begin{theorem}
If $M$ is the transfer matrix as defined above, and $\vs$ and $\vf$ the starting and finishing vectors, then the number of
Hamilton cycles in $N(w,k)$ is given by the single entry of the $1 \times 1$ matrix
\[
\vs^T M^k \vf.
\]
\end{theorem}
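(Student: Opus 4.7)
The approach is to track, layer by layer, how many partial Hamilton cycles of Type $2c$ have a terminal partition lying in each orbit $\mathcal{O}_i$, and to identify $M^T$ as the linear operator describing the effect of adding a single internal tile.

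First I would fix notation. For $0 \leqslant j \leqslant k$, let $a^{(j)}$ be the column vector in $\mathbb{Z}^\ell$ whose $i$-th component $a^{(j)}_i$ counts the partial Hamilton cycles $H(0,j)$ (i.e.\ valid sequences consisting of a starting tile followed by $j$ internal tiles, each of Type $2c$) whose terminal partition on the right-hand side of $L_j$ lies in $\mathcal{O}_i$. By the definitions of the starting and finishing vectors, $a^{(0)} = \vs$, and the total number of Hamilton cycles of Type $2c$ obtained by closing each $H(0,k)$ with an admissible finishing tile is $\sum_i a^{(k)}_i (\vf)_i = (a^{(k)})^T \vf$.

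The heart of the proof is the recurrence $a^{(j+1)} = M^T a^{(j)}$. To establish this, I would fix a representative $\pi_i \in \mathcal{O}_i$ for each $i$ and consider an arbitrary $\pi \in \mathcal{O}_i$ with $\rho(\pi_i) = \pi$ for some rotation $\rho$ in the cyclic group generated by $i \mapsto i+1 \pmod{w}$. The action of $\rho$ on the vertices of an internal layer sends tiles to tiles, and maps a tile transferring $\pi_i$ to some $\sigma$ to a tile transferring $\pi$ to $\rho(\sigma)$; since each orbit $\mathcal{O}_q$ is closed under rotation, the map $T \mapsto \rho(T)$ is a bijection between tiles transferring $\pi_i$ to $\mathcal{O}_q$ and tiles transferring $\pi$ to $\mathcal{O}_q$. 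Hence the count of such tiles equals $M_{iq}$ independently of the choice of $\pi \in \mathcal{O}_i$, and summing over partial Hamilton cycles and over their admissible next tiles gives $a^{(j+1)}_q = \sum_i a^{(j)}_i M_{iq}$, i.e.\ $a^{(j+1)} = M^T a^{(j)}$. The same rotational-symmetry argument shows that the number of finishing tiles completing $H(0,k)$ into a single Hamilton cycle depends only on the orbit of its terminal partition, justifying the definition of $\vf$.

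Iterating the recurrence $k$ times yields $(a^{(k)})^T = \vs^T M^k$, and combining with the final closure step gives the total count $\vs^T M^k \vf$ as claimed. The main subtlety, and the place that needs the most care, is verifying that the terminal partition is a sufficient statistic for extension: adding an internal tile must neither prematurely close a short cycle (which happens exactly when the tile joins two terminals lying in the same cell of $\pi$) nor leave any vertex of $L_{j+1}$ with the wrong degree, and the resulting terminal partition on the right-hand side of $L_{j+1}$ must be recoverable from $\pi$ and the tile alone. Because each of these conditions is Markov in nature — depending only on $\pi$ and the new tile rather than on the earlier history — the recursion above is valid; the analogous self-sufficiency for the final closure into a single Hamilton cycle completes the argument.
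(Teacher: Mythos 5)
Your proposal is correct and follows essentially the same route as the paper: an induction on the number of layers showing that the orbit-indexed vector of partial-Hamilton-cycle counts is updated by multiplication by the transfer matrix, followed by the closure step via $\vf$. The extra care you take — justifying via the rotation action that $M_{iq}$ is independent of the representative $\pi \in \mathcal{O}_i$, and flagging that the terminal partition is a sufficient statistic for extension — is exactly what the paper relies on, though it places those justifications in the surrounding discussion (the definition of $M$ and the earlier ``key observation'') rather than inside the proof itself.
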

\begin{proof}
We claim that for each $r \geqslant 0$, the $j$-th entry of the row vector $\vs^T M^r$ is the number of partial Hamilton cycles $H(0,r)$ whose terminal
partition lies in $\mathcal{O}_j$. This is true when $r=0$, so we will show that this assertion remains true if the vector $\vs^T M^r$ is multiplied by $M$. 
The $j$-th coordinate of  $\vs^T M^{r+1}$ is given by
\[
(\vs^T M^{r+1})_j = \sum_{1 \leqslant i \leqslant \ell} (\vs^T M^{r})_i M_{ij}.
\]
This is the sum over all orbit indices $i$ of the product of the number of partial Hamilton cycles $H(0,r)$ with terminal partition in $\mathcal{O}_i$ by the number of ways in which a terminal partition in $\mathcal{O}_i$ can be transferred to a terminal partition in $\mathcal{O}_j$. This is exactly the total number of partial Hamilton cycles $H(0,r+1)$ with terminal partition in $\mathcal{O}_j$, as required. 

Similarly, the final step -- multiplication by $\vf$ -- is the sum over all orbit indices $i$ of the number of partial Hamilton cycles with terminal partition in $\mathcal{O}_i$ multiplied by the number of ways in which such a partial Hamilton cycle can be completed to a Hamilton cycle. 
\end{proof}

Again we can recover the results of \cref{cub5conmany} in this context as
\[
M^k =
 \left[
\begin{array}{cc}
12^{k/2} & 0\\
0 & 12^{k/2}
\end{array}
\right] 
\qquad \text{or }
M^k =
\left[
\begin{array}{cc}
0 & 3 \cdot 12^{(k-1)/2}\\
4 \cdot 12^{(k-1)/2} & 0 
\end{array}
\right] 
\]
for $k$ even and $k$ odd, respectively, and so for $k$ odd we get
\[
\vs^T M^k \vf
=
\left[
\begin{array}{cc}
0 & 5\\
\end{array}
\right]
\left[
\begin{array}{cc}
0 & 3 \cdot 12^{(k-1)/2}\\
4 \cdot 12^{(k-1)/2} & 0 
\end{array}
\right]
\left[
\begin{array}{c}
1\\
0
\end{array}
\right]
= 
\left[
20 \cdot 12^{(k-1)/2}
\right].
\]

For any particular choice of $w$ and $c$, the computation of the transfer matrix and the start- and finish-vectors can be totally automated, yielding a formula  involving a matrix power. For a computer algebra system, finding a specific power of an integer matrix is relatively easy, even if both the matrix and the power are quite large. However a list of explicit values for the numbers of Hamilton cycles in a selection of nanotubes of varying lengths (but always fixed width $w$) does not immediately reveal the asymptotic behaviour of these numbers.  In these cases, it is necessary to examine the eigenstructure of transfer matrix in order to extract information about the asymptotic growth of the number of Hamilton cycles as the length of the nanotube increases.

\begin{figure}
\begin{center}
\begin{tikzpicture}[scale=0.6]
\tikzstyle{vertex}=[circle, draw=black, thick, fill = black!15!white, inner sep = 0.65mm]
\tikzstyle{hamedge}=[ultra thick]
\foreach \x in {1,2,3} {
\node [vertex] (a\x) at (0,\x) {};
}
\node [vertex] (a0) at (-0.5,0) {};
\node [vertex] (a4) at (-0.5,4) {};
\draw [dotted] (a0)--(a1)--(a2)--(a3)--(a4)--(a0);
\draw  [dotted](a0)--(1,0);
\draw  [dotted](a1)--(1,1);
\draw  [dotted](a2)--(1,2);
\draw  [dotted](a3)--(1,3);
\draw [dotted] (a4)--(1,4);
\draw [hamedge]  (0.75,3)--(a3)--(a4)--(a0)--(a1)--(a2)--(0.75,2);
\node [right]at (1,2) {\small $2$};
\node [right]at (1,3) {\small $3$};
\pgftransformxshift{5cm}
\tile
\draw [hamedge] (r3)--(w3)--(v3)--(l3);
\draw [hamedge] (r4)--(w4)--(v4)--(w0)--(v0)--(w1)--(v1)--(w2)--(v2)--(l2);
\node at (ll3) {\small $3$};
\node at (ll2) {\small $2$};
\node at (rr3) {\small $3$};
\node at (rr4) {\small $4$};
\pgftransformxshift{7cm}
\tile
\draw[hamedge] (l2)--(v2)--(w3)--(r3);
\draw[hamedge] (l3)--(v3) -- (w4)--(v4) -- (w0)--(v0)--(w1)--(v1)--(w2) -- (r2);
\node at (ll3) {\small $3$};
\node at (ll2) {\small $2$};
\node at (rr2) {\small $2$};
\node at (rr3) {\small $3$};
\end{tikzpicture}
\end{center}
\caption{Tile representatives for a Type 2 Hamilton cycle}
\label{fig:type2tiles}
\end{figure}
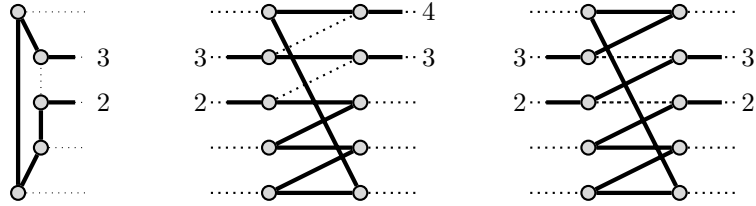

\begin{figure}
\begin{center}
\begin{tikzpicture}[scale=0.7]
\tikzstyle{vertex}=[circle, draw=black, thick, fill=black!15!white, inner sep = 0.65mm]
\tikzstyle{hamedge}=[ultra thick]
\tikzstyle{nothamedge}=[thick, dotted]
\foreach \x in {0,1,...,4} {

\node [vertex] (b\x) at (2,\x) {};
\node [vertex] (c\x) at (3,\x) {};
}
\foreach \x in {1,2,3} {
\node [vertex] (a\x) at (0,\x) {};
}
\node [vertex] (a0) at (-0.5,0) {};
\node [vertex] (a4) at (-0.5,4) {};


\draw [nothamedge] (a0)--(a1)--(a2)--(a3)--(a4)--(a0);


\foreach \x in {0,1,...,4} {
\draw [nothamedge] (a\x)--(b\x);
\draw [nothamedge] (b\x)--(c\x);
}

\foreach \x/\y in {0/1, 1/2, 2/3, 3/4, 4/0} {
\draw [nothamedge] (b\x)--(c\y);
}

\draw[hamedge] (a0)--(a4);
\draw[hamedge] (a4)--(b4);
\draw[hamedge] (b4)--(c4);
\draw[hamedge] (c2)--(b2);
\draw[hamedge] (b2)--(c3);
\draw[hamedge] (c3)--(b3);
\draw[hamedge] (b3)--(a3);
\draw[hamedge] (a3)--(a2);
\draw[hamedge] (a2)--(a1);
\draw[hamedge] (a1)--(b1);
\draw[hamedge] (b1)--(c1);
\draw[hamedge] (c0)--(b0);
\draw[hamedge] (b0)--(a0);


\draw [<->] (3.25,1)--(3.5,1)--(3.5,2)--(3.25,2);
\draw [<->] (3.25,0)--(3.75,0)--(3.75,4)--(3.25,4);

\end{tikzpicture}
\hspace{1cm}
\begin{tikzpicture}[scale=0.7]
\tikzstyle{vertex}=[circle, draw=black, thick, fill=black!15!white, inner sep = 0.65mm]
\tikzstyle{hamedge}=[ultra thick]
\tikzstyle{nothamedge}=[thick, dotted]
\foreach \x in {0,1,...,4} {

\node [vertex] (b\x) at (2,\x) {};
\node [vertex] (c\x) at (3,\x) {};
\node [vertex] (d\x) at (5,\x) {};
\node [vertex] (e\x) at (6,\x) {};
}
\foreach \x in {1,2,3} {
\node [vertex] (a\x) at (0,\x) {};
}
\node [vertex] (a0) at (-0.5,0) {};
\node [vertex] (a4) at (-0.5,4) {};


\draw [nothamedge] (a0)--(a1)--(a2)--(a3)--(a4)--(a0);


\foreach \x in {0,1,...,4} {
\draw [nothamedge](a\x)--(b\x);
\draw [nothamedge](b\x)--(c\x);
\draw [nothamedge](c\x)--(d\x);
\draw [nothamedge](d\x)--(e\x);
}

\foreach \x/\y in {0/1, 1/2, 2/3, 3/4, 4/0} {
\draw [nothamedge](b\x)--(c\y);
\draw [nothamedge](d\x)--(e\y);
}

\draw[hamedge] (a0)--(a4);
\draw[hamedge] (a4)--(b4);
\draw[hamedge] (b4)--(c4);
\draw[hamedge] (c4)--(d4);
\draw[hamedge] (d4)--(e4);
\draw[hamedge] (e4)--(d3);
\draw[hamedge] (d3)--(e3);
\draw[hamedge] (e2)--(d2);
\draw[hamedge] (d2)--(c2);
\draw[hamedge] (c2)--(b2);
\draw[hamedge] (b2)--(c3);
\draw[hamedge] (c3)--(b3);
\draw[hamedge] (b3)--(a3);
\draw[hamedge] (a3)--(a2);
\draw[hamedge] (a2)--(a1);
\draw[hamedge] (a1)--(b1);
\draw[hamedge] (b1)--(c1);
\draw[hamedge] (c1)--(d1);
\draw[hamedge] (d1)--(e1);
\draw[hamedge] (e0)--(d0);
\draw[hamedge] (d0)--(c0);
\draw[hamedge] (c0)--(b0);
\draw[hamedge] (b0)--(a0);


\draw [<->] (6.25,1)--(6.5,1)--(6.5,2)--(6.25,2);
\draw [<->] (6.25,0)--(6.75,0)--(6.75,3)--(6.25,3);

\end{tikzpicture}
\end{center}
\label{fig:extend}
\caption{Partial Hamilton cycle $H(0,1)$ extended to $H(0,2)$}
\label{fig:extending}
\end{figure}
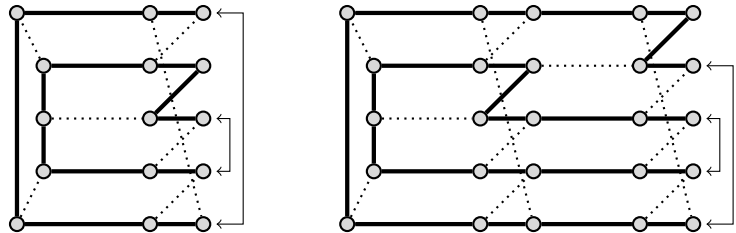


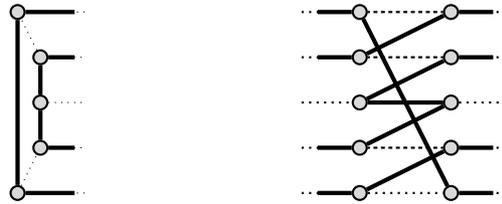
\begin{figure}
\begin{center}
\begin{tikzpicture}[scale=0.6]
\tikzstyle{vertex}=[circle, draw=black, thick, fill = black!15!white, inner sep = 0.65mm]
\tikzstyle{hamedge}=[ultra thick]
\foreach \x in {1,2,3} {
\node [vertex] (a\x) at (0,\x) {};
}
\node [vertex] (a0) at (-0.5,0) {};
\node [vertex] (a4) at (-0.5,4) {};
\draw [dotted] (a0)--(a1)--(a2)--(a3)--(a4)--(a0);
\draw [dotted] (a0)--(1,0);
\draw [dotted] (a1)--(1,1);
\draw [dotted] (a2)--(1,2);
\draw [dotted](a3)--(1,3);
\draw [dotted] (a4)--(1,4);

\draw [hamedge] (0.75,0)--(a0)--(a4)--(0.75,4);
\draw [hamedge] (0.75,1)--(a1)--(a2)--(a3)--(0.75,3);

\pgftransformxshift{7cm}

\tile

\draw [hamedge] (r4)--(w4)--(v3)--(l3);
\draw [hamedge] (r3)--(w3)--(v2)--(w2)--(v1)--(l1);
\draw [hamedge] (r1)--(w1)--(v0)--(l0);
\draw [hamedge] (r0)--(w0)--(v4)--(l4);

\end{tikzpicture}
\end{center}
\caption{A starting tile with terminal partition $\{04|13|2\}$ and a compatible second tile}
\label{fig:compat0}
\end{figure}

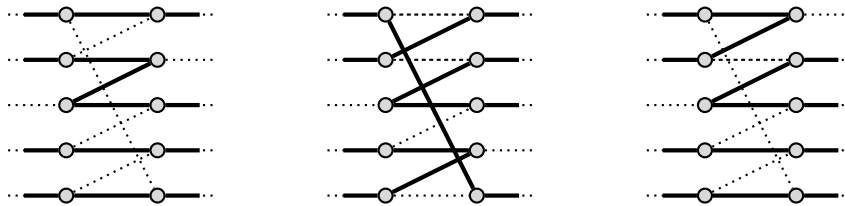
\begin{figure}
\begin{center}
\begin{tikzpicture}[scale=0.6]
\tikzstyle{vertex}=[circle, draw=black, thick, fill = black!15!white, inner sep = 0.65mm]
\tikzstyle{hamedge}=[ultra thick]

\tile 

\draw [hamedge] (r0)--(w0)--(v0)--(l0);
\draw [hamedge] (l1)--(v1)--(w1)--(r1);
\draw [hamedge] (l3)--(v3)--(w3)--(v2)--(w2)--(r2);
\draw [hamedge] (l4)--(v4)--(w4)--(r4);


\pgftransformxshift{7cm}

\tile


\draw [hamedge] (r4)--(w4)--(v3)--(l3);

\draw [hamedge] (l1)--(v1)--(w1)--(v0)--(l0);
\draw [hamedge] (r0)--(w0)--(v4)--(l4);
\draw [hamedge] (r3)--(w3)--(v2)--(w2)--(r2);

\pgftransformxshift{7cm}

\tile


\draw [hamedge] (r0)--(w0)--(v0)--(l0);
\draw [hamedge] (r1)--(w1)--(v1)--(l1);
\draw [hamedge] (r3)--(w3)--(v2)--(w2)--(r2);
\draw [hamedge] (l4)--(v4)--(w4)--(v3)--(l3);

\end{tikzpicture}

\end{center}
\caption{Three (more) tiles compatible with terminal partition $\{04|13|2\}$.}
\label{fig:compat1}
\end{figure}


\begin{figure}
\begin{center}
\begin{tikzpicture}[scale=0.6]
\tikzstyle{vertex}=[circle, draw=black, thick, fill = black!15!white, inner sep = 0.65mm]
\tikzstyle{hamedge}=[ultra thick]

\tile


\draw [hamedge] (l4)--(v4)--(w0)--(r0);
\draw [hamedge] (r3)--(w3)--(v2)--(w2)--(v1)--(l1);
\draw [hamedge] (l0)--(v0)--(w1)--(r1);
\draw [hamedge] (l3)--(v3)--(w4)--(r4);

\pgftransformxshift{7cm}

\tile


\draw [hamedge] (r4)--(w4)--(v3)--(l3);
\draw [hamedge] (l4)--(v4)--(w0)--(v0)--(l0);
\draw [hamedge] (l1)--(v1)--(w1)--(r1);
\draw [hamedge] (r3)--(w3)--(v2)--(w2)--(r2);
\pgftransformxshift{7cm}

\tile


\draw [hamedge] (r4)--(w4)--(v4)--(l4);
\draw [hamedge] (l3)--(v3)--(w3)--(v2)--(w2)--(r2);
\draw [hamedge] (l0)--(v0)--(w0)--(r0);
\draw [hamedge] (l1)--(v1)--(w1)--(r1);

\end{tikzpicture}
\end{center}
\caption{Internal tiles compatible with terminal partition $\{01|2|34\}$}
\label{fig:compat2}
\end{figure}
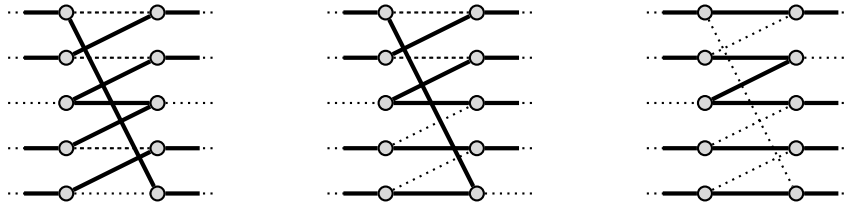


\section{The family \texorpdfstring{$N(6,k)$}{N(6,k)}}

In this section, we illustrate the transfer matrix method on a larger example, in this case finding the number of Type 4 Hamilton cycles in the infinite family of width-$6$ nanotubes $N(6,k)$.

There are $30$ non-crossing partitions of $\{0,1,2,3,4,5\}$ which fall into $6$ orbits under the rotation of order $6$ as follows:
\begin{align*}
\mathcal{O}_1 &= \langle \{ 0 | 1 | 2 3 | 4 5 \} \rangle ,\qquad
\mathcal{O}_2 = \langle \{ 0 | 1 | 2 5 | 3 4 \} \rangle ,\\
\mathcal{O}_3 &= \langle \{ 0 | 1 3 | 2 | 4 5 \} \rangle, \qquad
\mathcal{O}_4 = \langle \{ 0 | 1 5 | 2 | 3 4 \} \rangle ,\\
\mathcal{O}_5 &= \langle \{ 0 | 1 2 | 3 | 4 5 \} \rangle ,\qquad
\mathcal{O}_6 = \langle \{ 0 | 1 5 | 2 4 | 3 \} \rangle .
\end{align*}

Orbits $\mathcal{O}_5$ and $\mathcal{O}_6$ have size $3$ (because the rotation $i \mapsto i+3 \pmod 6$ fixes each partition from this orbit), while the remainder have size $6$.  The transfer matrix is 
\[
M = \left[
\begin{array}{cccccc}
0 & 2 & 0 & 0 &0 & 1\\
2 & 0 & 1 & 1 & 0 & 0\\
1 & 0 & 1 & 2 & 1 & 0\\
1 & 0 & 2 & 1 & 1 & 0\\
0 & 2 & 0 & 0 & 0 & 2\\
0 & 0 & 2 & 2 & 2 & 0\\
\end{array}
\right].
\]

For example, there are three tiles consistent with the terminal partition $\{0|1|23|45\}$, which is the representative of the orbit $\mathcal{O}_1$. These three tiles, illustrated in \cref{fig:sixtiles}, transfer $\{0|1|23|45\}$ to $\{03|1|2|45\}$, $\{02|1|35|4\}$ and $\{0|1|25|34\}$ respectively.  These three terminal partitions lie in $\mathcal{O}_2$, $\mathcal{O}_6$ and $\mathcal{O}_2$ respectively, and so the only non-zero entries in the first row of $M$ are $M_{12} = 2$ and   $M_{16} = 1$.

\begin{figure}
\begin{center}
\begin{tikzpicture}[scale=0.6]
\tikzstyle{vertex}=[circle, draw=black, thick, fill = black!15!white, inner sep = 0.65mm]
\sixtile
\draw [ultra thick, dotted] [bend left = 90] (l2) to (l3);
\draw [ultra thick, dotted] [bend left = 90] (l4) to (l5);
\draw [ultra thick] (ll2)--(v2);
\draw [ultra thick] (ll3)--(v3);
\draw [ultra thick] (ll4)--(v4);
\draw [ultra thick] (ll5)--(v5);
\draw [ultra thick] (w2)--(v1)--(w1)--(v0)--(w0);
\draw [ultra thick] (v2)--(w2);
\draw [ultra thick] (w0)--(r0);
\draw [ultra thick] (w3)--(r3);
\draw [ultra thick] (w4)--(r4);
\draw [ultra thick] (w5)--(r5);
\draw [ultra thick] (v3)--(w3);
\draw [ultra thick] (v4)--(w4);
\draw [ultra thick] (v5)--(w5);
\pgftransformxshift{7cm}
\sixtile
\draw [ultra thick, dotted] [bend left = 90] (l2) to (l3);
\draw [ultra thick, dotted] [bend left = 90] (l4) to (l5);
\draw [ultra thick] (ll2)--(v2);
\draw [ultra thick] (ll3)--(v3);
\draw [ultra thick] (ll4)--(v4);
\draw [ultra thick] (ll5)--(v5);
\draw [ultra thick] (w2)--(v1)--(w1)--(v0)--(w0);
\foreach \x in {0,2,3,5} {
  \draw [ultra thick] (w\x)--(r\x);
 }
\draw [ultra thick] (v4)--(w4)--(v3);
\draw [ultra thick] (v5)--(w5);
\draw [ultra thick] (v2)--(w3);
\pgftransformxshift{7cm}
\sixtile
\draw [ultra thick, dotted] [bend left = 90] (l2) to (l3);
\draw [ultra thick, dotted] [bend left = 90] (l4) to (l5);
\draw [ultra thick] (ll2)--(v2);
\draw [ultra thick] (ll3)--(v3);
\draw [ultra thick] (ll4)--(v4);
\draw [ultra thick] (ll5)--(v5);
\draw [ultra thick] (w2)--(v1)--(w1)--(v0)--(w0);
\foreach \x in {2,3,4,5} {
  \draw [ultra thick] (w\x)--(r\x);
 }
 \draw [ultra thick] (w0)--(v5);
 \draw [ultra thick] (v2)--(w3);
 \draw [ultra thick] (v3)--(w4);
 \draw [ultra thick] (v4)--(w5);
\end{tikzpicture}
\caption{Tiles consistent with $\{0|1|23|45\}$}
\label{fig:sixtiles}
\end{center}
\end{figure}
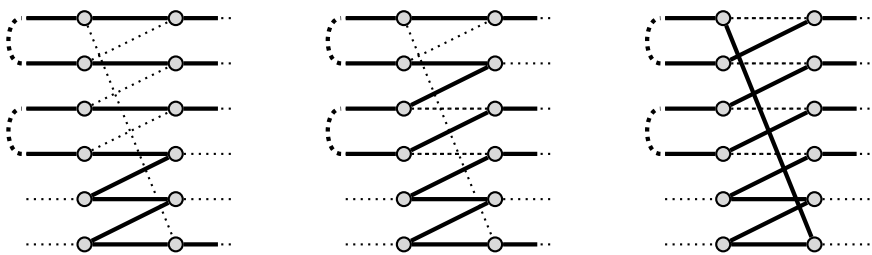

Checking all the possible initial endtiles (i.e. choices for $H(0)$) shows that there are only $9$ possible initial terminal partitions, namely any terminal partition in $\mathcal{O}_1$ and any in $\mathcal{O}_6$, and therefore
\[
\vs = \left[
\begin{array}{cccccc}
0&6&0&0&0&3
\end{array}
\right]^T.
\]
By symmetry, there are only the same $9$ possible end tiles for the final layer, but now we are asking how many of them can \emph{complete} a partial Hamilton cycle with a particular terminal partition. A straightforward case analysis on the $6$ possible terminal partitions shows that only terminal partitions in $\mathcal{O}_1$ and $\mathcal{O}_5$ can be completed to a Hamilton cycle, each in a unique fashion. So 
\[
\vf = \left[
\begin{array}{cccccc}
1&0&0&0&1&0
\end{array}
\right]^T.
\]

Now we wish to calculate, or at least compute the asymptotics for, the expression $\vs^T M^k \vf$. The matrix $M$ is diagonalizable and so there is a basis $\{v_1, v_2, \ldots, v_6\}$ for $\mathbb{R}^6$ consisting of eigenvectors of $M$. We can then find an expression for $\vf$ of the form
\[
\vf = \alpha_1 v_1 + \alpha_2 v_2 + \cdots + \alpha_6 v_6
\]
from which it follows that 
\[
M^k \vf = \alpha_1 \lambda_1^k v_1 + \alpha_2 \lambda_2^k v_2 + \cdots + \alpha_6 \lambda_6^k v_6
\]
and finally
\[
\vs^T M^k \vf =\left(\alpha_1 \vs^T v_1\right) \lambda_1^k+\left(\alpha_2 \vs^T v_2\right) \lambda_2^k+ \cdots + \left( \alpha_6  \vs^T v_6 \right) \lambda_6^k.
\]
  
This expression is dominated by the largest eigenvalue $\lambda_i$ for which $\alpha_i \vs^T v_i \ne 0$. In this case, using a computer algebra system, it is possible to compute exact expressions for the eigenvalues of $M$, the eigenvectors of $M$ and the coefficients $\alpha_i$, but with the caveat that the intermediate terms and final expressions are given in terms of roots of specific integer polynomials. In our particular case, the final expression for $\left(\alpha_1 \vs^T v_1\right) \lambda_1^k$ has the surprisingly simple form $A B^k$, where $A \approx 2.756982978$ is the largest real root of the polynomial  $7x^3 - 63x + 27$ and $B \approx 4.493959207$ is the largest root of the polynomial $x^3 - 4x^2 - 4x  + 8$.   The expression $AB^k$ is correct to within $1\%$ of the exact value (as given by the matrix equation) by $k = 5$ and within $0.01\%$ by $k=10$.

We conclude by asking whether the nanotubes of width $6$ beat the nanotubes of width $5$, which we can analyse just by comparing rate of growth for each family expressed as a function of the number of vertices. (Notice that we are also ignoring the Hamilton cycles of Type 2 and Type 6 in the width 6 case, but as these are dominated by the number of Type 4, we are justified in doing so.) Therefore this is a comparison between $12^{v/20}$ for the $N(5,k)$ family and and $B^{v/12}$ for the $N(6,k)$ family.  However \[
12^{1/20} \approx 1.132293625 < 1.133406661 \approx B^{1/12},
\]
and so the nanotubes of width $6$ will (eventually) beat the nanotubes of width $5$. The smallest number of vertices for which there exists a graph in each of the families is when $v=60$, but $N(5,5)$ with $3040$ Hamilton cycles has dramatically more than $N(6,4)$ with $1232$ Hamilton cycles (of which $1104$ are Type 4).

\section{Conclusion}

While there is a vast literature on the \emph{existence} of Hamilton cycles in graphs, especially cubic graphs, and even more especially planar cubic graphs, there is much less on the \emph{enumeration} of Hamilton cycles.   

One of the first results in this area was Schwenk's \cite{MR1007713} enumeration of the number of Hamilton cycles in the generalized Petersen graphs $P(m,2)$. His very first open question, now $30$ years old -- whether it is possible to determine the number of Hamilton cycles in $P(m,k)$ for $k \geqslant 3$ -- has still not been answered.

It seems that Chia \& Thomassen \cite{MR2951794} were the first to explicitly consider counting the \emph{rate of growth} of the maximum number of Hamilton cycles in various classes of cubic graphs. For a range of classes of cubic graphs, defined primarily by connectivity constraints they found promising families of graphs rich in Hamilton cycles, and then asked whether these were actually the best possible. Given that this was likely to be done without computer assistance, it is a testament to their ingenuity that it seems difficult to find families of graphs richer in Hamilton cycles. In particular, they found a family of $2$-connected cubic graphs with $\left(\sqrt[12]{10}\right)^v$ Hamilton cycles and a family of $3$-connected graphs with $(3/\sqrt{2}) \left(\sqrt[4]{2}\right)^v$ Hamilton cycles. Either a proof that these families are indeed extremal or an example with more Hamilton cycles would be very interesting.

Finally, we turn to the single graph on $56$ vertices that somehow beats the generalized Petersen graph $P(28,2)$, which is pictured in \cref{fig:full56}. A rapid examination of the faces illustrated in the figure reveal that all have size $5$ or $6$ and so the graph is again a fullerene. The graph has an automorphism group of order $6$, but it is not cyclic so we cannot obtain a drawing with $6$-fold rotational symmetry.  It would be interesting to see if this graph belongs to another family of fullerenes with more Hamilton cycles than the generalized Petersen graphs.


%
%
%
%
%


\begin{figure}
\begin{center}
\begin{tikzpicture}[scale=3]
\tikzstyle{vertex}=[circle, fill=blue!25!white, draw=black, inner sep = 0.2mm]
\coordinate (c0) at (0.0728109501765787, 0.298219183993922);
\coordinate (c1) at (-0.00628131802869049, 0.238339981328841);
\coordinate (c2) at (0.144673703653233, 0.236630652342169);
\coordinate (c3) at (0.0800404649051932, 0.419686918310756);
\coordinate (c4) at (-0.101682918795864, 0.256127135992194);
\coordinate (c5) at (0.0100280145332141, 0.160673624000408);
\coordinate (c6) at (0.136147449939479, 0.174404144495944);
\coordinate (c7) at (0.225062710843642, 0.237268628536641);
\coordinate (c8) at (0.294044168473376, 0.530213204456887);
\coordinate (c9) at (-0.126733723934375, 0.430628366481458);
\coordinate (c10) at (-0.150070059264266, 0.327269539491978);
\coordinate (c11) at (-0.148697379094636, 0.202771887155763);
\coordinate (c12) at (-0.0485560187459463, 0.104349212662495);
\coordinate (c13) at (0.0849213803742791, 0.139331678009888);
\coordinate (c14) at (0.178847265790926, 0.147250103135776);
\coordinate (c15) at (0.228422388362756, 0.170247941992289);
\coordinate (c16) at (0.302092040514936, 0.304927291275465);
\coordinate (c17) at (0.500000000000000, 0.866025403784439);
\coordinate (c18) at (-0.310171577444053, 0.544928641641640);
\coordinate (c19) at (-0.221793535062559, 0.295053116002283);
\coordinate (c20) at (-0.211529537525625, 0.219757653855847);
\coordinate (c21) at (-0.132879680962419, 0.132430871619248);
\coordinate (c22) at (-0.0228163898086337, 0.0199431423678276);
\coordinate (c23) at (0.108588676650144, 0.0829172655333112);
\coordinate (c24) at (0.171971959070542, 0.0970982229190935);
\coordinate (c25) at (0.281357188453701, 0.126225094304450);
\coordinate (c26) at (0.387169242227789, 0.147300040832868);
\coordinate (c27) at (1.00000000000000, 0.000000000000000);
\coordinate (c28) at (-0.500000000000000, 0.866025403784439);
\coordinate (c29) at (-0.303781008397784, 0.338132154659024);
\coordinate (c30) at (-0.264097698419681, 0.161447958409494);
\coordinate (c31) at (-0.201385645046675, 0.0901715150394873);
\coordinate (c32) at (-0.0887658411855650, -0.0568416812299644);
\coordinate (c33) at (0.0688726905056100, 0.0123218956709524);
\coordinate (c34) at (0.228479934770557, 0.0611273000881937);
\coordinate (c35) at (0.578058497714732, 0.0107477369186893);
\coordinate (c36) at (0.500000000000000, -0.866025403784439);
\coordinate (c37) at (-1.00000000000000, 0.000000000000000);
\coordinate (c38) at (-0.379377912686741, 0.174414706333149);
\coordinate (c39) at (-0.207179555757926, -0.0233642849102808);
\coordinate (c40) at (-0.0363015779901353, -0.167103901147440);
\coordinate (c41) at (0.120845784675320, -0.0658947208882816);
\coordinate (c42) at (0.232110656787427, -0.0399414169589628);
\coordinate (c43) at (0.347006250916405, -0.115056830076801);
\coordinate (c44) at (0.245424730311183, -0.555678298398803);
\coordinate (c45) at (-0.500000000000000, -0.866025403784439);
\coordinate (c46) at (-0.570255031242760, 0.0236640059309278);
\coordinate (c47) at (-0.331387181041537, -0.103422688540365);
\coordinate (c48) at (-0.0816928995177637, -0.274405380835521);
\coordinate (c49) at (0.0615540067329227, -0.170064641376834);
\coordinate (c50) at (0.230849598247057, -0.315976810190128);
\coordinate (c51) at (0.00542459268649184, -0.485032681221842);
\coordinate (c52) at (-0.237100787812478, -0.553875290549341);
\coordinate (c53) at (-0.216726956123926, -0.310567786641742);
\coordinate (c54) at (0.00794983556077058, -0.345544454717381);
\coordinate (c55) at (0.100117813513584, -0.277195302094781);
%
%
%

\node [vertex] (v0) at (c0) {};
\node [vertex] (v1) at (c1) {};
\node [vertex] (v2) at (c2) {};
\node [vertex] (v3) at (c3) {};
\node [vertex] (v4) at (c4) {};
\node [vertex] (v5) at (c5) {};
\node [vertex] (v6) at (c6) {};
\node [vertex] (v7) at (c7) {};
\node [vertex] (v8) at (c8) {};
\node [vertex] (v9) at (c9) {};
\node [vertex] (v10) at (c10) {};
\node [vertex] (v11) at (c11) {};
\node [vertex] (v12) at (c12) {};
\node [vertex] (v13) at (c13) {};
\node [vertex] (v14) at (c14) {};
\node [vertex] (v15) at (c15) {};
\node [vertex] (v16) at (c16) {};
\node [vertex] (v17) at (c17) {};
\node [vertex] (v18) at (c18) {};
\node [vertex] (v19) at (c19) {};
\node [vertex] (v20) at (c20) {};
\node [vertex] (v21) at (c21) {};
\node [vertex] (v22) at (c22) {};
\node [vertex] (v23) at (c23) {};
\node [vertex] (v24) at (c24) {};
\node [vertex] (v25) at (c25) {};
\node [vertex] (v26) at (c26) {};
\node [vertex] (v27) at (c27) {};
\node [vertex] (v28) at (c28) {};
\node [vertex] (v29) at (c29) {};
\node [vertex] (v30) at (c30) {};
\node [vertex] (v31) at (c31) {};
\node [vertex] (v32) at (c32) {};
\node [vertex] (v33) at (c33) {};
\node [vertex] (v34) at (c34) {};
\node [vertex] (v35) at (c35) {};
\node [vertex] (v36) at (c36) {};
\node [vertex] (v37) at (c37) {};
\node [vertex] (v38) at (c38) {};
\node [vertex] (v39) at (c39) {};
\node [vertex] (v40) at (c40) {};
\node [vertex] (v41) at (c41) {};
\node [vertex] (v42) at (c42) {};
\node [vertex] (v43) at (c43) {};
\node [vertex] (v44) at (c44) {};
\node [vertex] (v45) at (c45) {};
\node [vertex] (v46) at (c46) {};
\node [vertex] (v47) at (c47) {};
\node [vertex] (v48) at (c48) {};
\node [vertex] (v49) at (c49) {};
\node [vertex] (v50) at (c50) {};
\node [vertex] (v51) at (c51) {};
\node [vertex] (v52) at (c52) {};
\node [vertex] (v53) at (c53) {};
\node [vertex] (v54) at (c54) {};
\node [vertex] (v55) at (c55) {};
\draw (v0) -- (v1);
\draw (v0) -- (v2);
\draw (v0) -- (v3);
\draw (v1) -- (v4);
\draw (v1) -- (v5);
\draw (v2) -- (v6);
\draw (v2) -- (v7);
\draw (v3) -- (v8);
\draw (v3) -- (v9);
\draw (v4) -- (v10);
\draw (v4) -- (v11);
\draw (v5) -- (v12);
\draw (v5) -- (v13);
\draw (v6) -- (v13);
\draw (v6) -- (v14);
\draw (v7) -- (v15);
\draw (v7) -- (v16);
\draw (v8) -- (v16);
\draw (v8) -- (v17);
\draw (v9) -- (v10);
\draw (v9) -- (v18);
\draw (v10) -- (v19);
\draw (v11) -- (v20);
\draw (v11) -- (v21);
\draw (v12) -- (v21);
\draw (v12) -- (v22);
\draw (v13) -- (v23);
\draw (v14) -- (v15);
\draw (v14) -- (v24);
\draw (v15) -- (v25);
\draw (v16) -- (v26);
\draw (v17) -- (v27);
\draw (v17) -- (v28);
\draw (v18) -- (v28);
\draw (v18) -- (v29);
\draw (v19) -- (v20);
\draw (v19) -- (v29);
\draw (v20) -- (v30);
\draw (v21) -- (v31);
\draw (v22) -- (v32);
\draw (v22) -- (v33);
\draw (v23) -- (v24);
\draw (v23) -- (v33);
\draw (v24) -- (v34);
\draw (v25) -- (v26);
\draw (v25) -- (v34);
\draw (v26) -- (v35);
\draw (v27) -- (v35);
\draw (v27) -- (v36);
\draw (v28) -- (v37);
\draw (v29) -- (v38);
\draw (v30) -- (v31);
\draw (v30) -- (v38);
\draw (v31) -- (v39);
\draw (v32) -- (v39);
\draw (v32) -- (v40);
\draw (v33) -- (v41);
\draw (v34) -- (v42);
\draw (v35) -- (v43);
\draw (v36) -- (v44);
\draw (v36) -- (v45);
\draw (v37) -- (v45);
\draw (v37) -- (v46);
\draw (v38) -- (v46);
\draw (v39) -- (v47);
\draw (v40) -- (v48);
\draw (v40) -- (v49);
\draw (v41) -- (v42);
\draw (v41) -- (v49);
\draw (v42) -- (v43);
\draw (v43) -- (v50);
\draw (v44) -- (v50);
\draw (v44) -- (v51);
\draw (v45) -- (v52);
\draw (v46) -- (v47);
\draw (v47) -- (v53);
\draw (v48) -- (v53);
\draw (v48) -- (v54);
\draw (v49) -- (v55);
\draw (v50) -- (v55);
\draw (v51) -- (v52);
\draw (v51) -- (v54);
\draw (v52) -- (v53);
\draw (v54) -- (v55);
\end{tikzpicture}
\caption{A $56$-vertex fullerene with $1746$ Hamilton cycles} 
\label{fig:full56}
\end{center}
\end{figure}
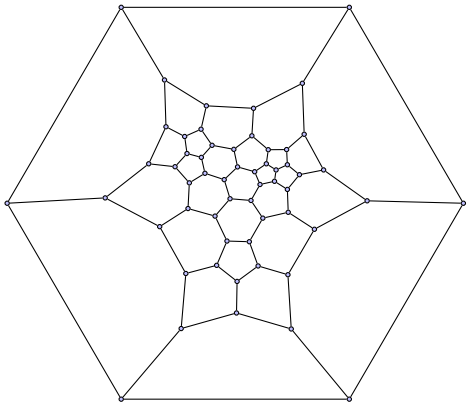

\bibliography{../gordonmaster}
\bibliographystyle{plain}

\end{document}